\newtheorem{lemma}{Lemma}
\newtheorem{theorem}[lemma]{Theorem}
\newtheorem{definition}[lemma]{Definition}
\newenvironment{proof}{\noindent{\em Proof:}}{\hfill $\Box$~\\}
\newtheorem{proposition}[lemma]{Proposition}
\newtheorem{remark}{Remark}
\newtheorem{example}[lemma]{Example}
\def\be{\begin{eqnarray}}
\def\ee{\end{eqnarray}}
\def\bes{\begin{eqnarray*}}
\def\ees{\end{eqnarray*}}
\def\R{\mathbb{R}}       
\def\C{\mathbb C}       
\def\cP{\mathcal P}
\def\cT{\mathcal{T}}
\def\cL{\mathcal{L}}
\def\vV{\mathcal V}
\def\sS{\mathcal S}
\def\cQ{\mathcal Q}
\def\cF{\mathcal F}
\def\toro{(\mathrm{HT})_{\om}}
\def\Q{\mathbb Q}
\def\N{\mathbb N}
\def\ot{\,\overline{t}\,}
\def\ox{\,\overline{x}\,}
\def\oy{\,\overline{y}\,}
\def\oz{\,\overline{z}\,}
\def\om{\,\overline{m}\,}
\def\dom{\mathrm{dom}}
\def\jac{\mathrm{Jac}}
\def\exp{\mathrm{e}}
\begin{document}

\title{Hybrid Trigonometric Varieties}

\author{Alberto Lastra, J. Rafael  Sendra \\
			 Dpto.\ de F\'isica y Matem\'aticas,
			 Universidad de Alcal\'a \\
             Ap. Correos 20,
E-28871 Alcal\'a de Henares,
Madrid (Spain) \\
{\tt alberto.lastra@uah.es} \\
{\tt rafael.sendra@uah.es} \\
Juana Sendra \\
Dpto.~Matem\'atica Aplicada a las TIC. \\ Universidad Polit\'ecnica de Madrid, Spain \\
{\tt juana.sendra@upm.es}
}
\date{}          
\maketitle

\begin{abstract}
In this paper we introduce the notion of hybrid trigonometric parametrization as a tuple of real rational expressions involving circular and hyperbolic trigonometric functions as well as monomials, with the restriction that variables in each block of functions are different. We analyze the main properties of the varieties defined by these parametrizations and we prove that they are exactly the class of real unirational varieties. In addition, we provide algorithms to implicitize and to convert a hybrid trigonometric parametrization into a unirational one, and viceversa.
\end{abstract}

\noindent {\bf  keywords:} Trigonometric parametrization, hyperbolic parametrization, implicitization algorithm,
unirational algebraic variety.


\section{Introduction}
In many problems, the parametric representation of geometric objects turns to be a fundamental tool. Clear examples of this claim may be found in some geometric constructions in computer aided design, like plotting, computing intersections, etc. (see \cite{HL}), or in computing integrals or solving differential equations (see e.g. \cite{fenggao2}, \cite{graseggerlastrasendrawinkler}). Probably the most common used parametrizations are the rational parametrizations (see \cite{HL} and \cite{libro}), but other types of parametrizations can also be applied, as radical parametrizations (see \cite{SSev1}, \cite{SSev2}) or trigonometric parametrizations (see \cite{hongschicho}, \cite{close}, \cite{pena}, \cite{sanchez}). Alternatively, one may work locally with parametrizations using rational or trigonometric splines  (see  \cite{uniform}, \cite{HL},\cite{close}, \cite{schicho-model}).

In this paper, we focus on the trigonometric-like type of parametrizations. The class of varieties studied in this paper is extended from the trigonometric curves considered in~\cite{hongschicho}, i.e. curves parametrized in terms of truncated Fourier series. This extension is made in three respects. On the one hand, we analyze varieties associated to the so-called hybrid trigonometric parametrizations in which not only circular trigonometric functions may appear, but also hyperbolic trigonometric and monomials; the idea of considering hyperbolic functions already appears in \cite{trigoHip}. On the other hand, not only polynomials are accounted, but also rational parametrizations of the previous form. Finally, the study is done for general real algebraic varieties and it is not restricted to the case of curves or surfaces.

So, we may be leading with a parametrization of the form (see Definition \ref{def:param-trig} for further details)
\[ \left(\dfrac{\sin(t_1)}{\cos(t_2)+\cosh(t_3)}, t_4+\sinh(t_3),t_4\cos(t_1),\dfrac{\cosh(t_3)}{t_4},\sin(t_2)\right). \]
We call this type of parametrizations hybrid trigonometric in the sense that the combine rationally elements from three different sets, namely
$$\{\sin(t_i),\cos(t_i)\}_{1\leq i \leq m_1}, \{\sinh(t_i),\cosh(t_i)\}_{m_1+1\leq i \leq m_2}, \{t_i\}_{m_2+1\leq i \leq m_3}.$$ Considering the parametrization as a real-valued function, and taking the Zariski closure of its image, we introduce the notion of hybrid trigonometric variety (see Definition \ref{definition-trigonometric}). We prove that hybrid trigonometric varieties are irreducible. Furthermore, we prove that they are precisely the (real) unirational varieties; we recall that real unirational means that it can be parametrized by means of real rational functions, but the corresponding function associated to the parametrization might not be injective. In addition, we provide algorithms to implicitize the hybrid trigonometric parametrization and to convert unirational parametrizations into hybrid trigonometric parametrization, and viceversa; for any prescribed triple $(m_1,m_2,m_3)$ such that $m_1+m_2+m_3$ is the dimension of the variety (see above the meaning of $m_i$).

At first glance one may notice no advance on this approach due to the absence of an enlargement of the class of unirational varieties when considering hybrid trigonometric parametrizations. However, a deepen study reveals that the appropriate point of view, considering either rational or trigonometric parametrizations, may lead to a more accurate solution of a problem under consideration; and hence being provided with conversion and implicitation algorithms enhances the applicability of the unirational varieties.

We devote a section to illustrate by examples some potential applications of hybrid trigonometric varieties. We comment some of them in this introduction.  Trigonometric curves emerge in numerous areas, as stated in~\cite{hongschicho}: classical curves, differential equations, Fourier analysis, etc. In addition, the important role of the varieties under study here are also shown up in recent studies. In the  work~\cite{naher} (see also ~\cite{chen}, ~\cite{kaplan}, and ~\cite{kolebaje} ) the extended generalized Riccati Equation Mapping Method is applied for the (1+1)-Dimensional Modified KdV Equation (see Subsection \ref{subsec-edo}). The authors arrive at different families of solutions, classified into soliton and soliton-like solutions (written in terms of rational functions of hyperbolic ones), and periodic solutions (written as rational functions of trigonometric ones), under different cases of the parameters involved. Applying the ideas in \cite{fenggao2}, and using the results in this paper, one may approach the problem transforming the trigonometric parametric parametrization induced by the solution into a rational one.

Another source of applications is the use of trigonometric functions to describe geometric constructions like offsets, conchoids, cissoids, epicycloids, hypocycloids, etc. In this case, one usually introduces polar parametrizations. A polar representation of a surface  is of the form
$$f(u,v)=\rho(u,v)\boldsymbol{k}(u,v),$$
where $\left\|\boldsymbol{k}(u,v)\right\|=1$ is a parametrization of the unit sphere, and $\rho(u,v)$ is a positive radius function; for references on this topic, we refer to~\cite{concoid1,concoid2}. This work is concerned with the case in which both $\rho$ and $\boldsymbol{k}$ are expressed as rational functions of $\{\cos(u),\sin(u),\cos(v),\sin(v)\}$. Indeed, Subsection \ref{subsec-epi-hypo} deals with the study of epicycloid and hypocycloid surfaces in which  $\boldsymbol{k}$ is chosen as the spherical coordinates in $\R^3$.

Trigonometric curves and surfaces provide a wide catalog of shapes  to be used in the application of the Hough transform to image processing. However, in order to apply the method one needs to deal with the implicit representation of the curves and surfaces in the catalog (see \cite{HT1}, \cite{HT2}). So, implicitization processes, as detailed in this paper, are required.

Other applications of this family of parametrizations are the interpolation of certain functions via quotients of trigonometric polynomials, as described in~\cite{hussain}, the plotting of curves and surfaces via trigonometric parametrizations (see Subsection \ref{subsec-plotting}) or the computation of intersection of geometric objects given in trigonometric form; in this last case, it is useful to transform the given parametrization into a rational one (see Subsection \ref{subsec-intersection}).

The paper is structured as follows. Section  \ref{sec-theory-def} is devoted to introduce the notions of hybrid trigonometric parametrization and variety and the first properties are studied. In Section \ref{sec:theory-properties} we analyze the fundamental properties of the hybrid trigonometric varieties and we see that they are characterized as the real unirational varieties. In Section \ref{sec-algorithms} we outline the algorithms derived from the proofs in the previous section, and in Section \ref{sec-aplicactions} we illustrate by examples the potential applicability of our results. The paper ends with a summary of conclusions.

Throughout this paper, we will be working with both the usual Euclidean topology and the Zariski topology. In case of ambiguity we will specify which topology is used.

\section{Hybrid Trigonometric Parametrizations and Varieties}\label{sec-theory-def}


We denote by $\dom(f)$ the domain of a function $f$, and  by $\jac(f)$ its Jacobian. Let $m,n\in \mathbb{N}$ such that $0<m<n$; $n$ will represent the dimension of the affine space where we work and $m$ the dimension of the variety. We denote $\ot=(t_1,\ldots,t_m)$. In addition, in the sequel we consider non-negative integers $m_1,m_2,m_3$ such that $m_1+m_2+m_3=m$. We will use the
notation $\om=(m_1,m_2,m_3)$.

In the next, we introduce the notion of $\om$--hybrid trigonometric parametrization that essentially  is an $n$-tuple depending on $m$ parameters such that $m_1$ of them appear as the variable of a sine or a cosine, $m_2$ appear as the variable of a hyperbolic sine or cosine and $m_3$ of them appear rationally. More precisely, we have the following the definition.

\begin{definition}\label{def:param-trig}
We say that an $n$-tuple
\[ \cT(\ot)=(T_1(\ot),\ldots,T_n(\ot)), \]
is an {\sf $\om$--hybrid trigonometric parametrization} if there exists a decomposition
$\{1,\ldots,m\}=J_1\cup J_2 \cup J_3$ with $\#(J_i)=m_i$, $i=1,2,3$ and there exist $\alpha_{ij},\alpha^{*}_{ij}\in \Q\setminus\{0\}$ and $\omega_{ij},\omega^{*}_{ij}\in \R$ such that
$$T_i(\ot)\in \R\left(\mathcal{A}_1,\mathcal{A}_2,\mathcal{A}_3\right)$$
where
\begin{equation*}\label{eq:aaa}
\left\{\begin{array}{lll} \mathcal{A}_1&=&\{\cos\left(\alpha_{i1} t_i +\omega_{i1}\right),\sin\left(\alpha_{i1}^* t_i +\omega_{i1}^*\right)\}_{i\in J_1}, \\  \mathcal{A}_2&=&\{\cosh\left(\alpha_{i2} t_i +\omega_{i2}\right),\sinh\left(\alpha_{i2}^* t_i +\omega_{i2}^*\right)\}_{i\in J_2}, \\
\mathcal{A}_3&=&\{t_j\}_{j\in J_3},
\end{array}
\right.
\end{equation*}
and  all parameters in $\ot$ do appear in $\cT(\ot)$.
Associated to $\cT$ we will consider the real function $\cT:\dom(\cT)\subset \R^m \rightarrow \R^n; \ot \mapsto \cT(\ot)$.
\end{definition}

\begin{remark}\label{re-0}
We will refer to $(m,0,0)$--parametrizations as {\sf circular trigonometric}, and  to $(0,m,0)$--parametrizations as {\sf hyperbolic trigonometric}. Note that $(0,0,m)$--parametrizations are precisely rational parametrizations. In Theorem \ref{theorem-TU}, we prove that three notions, namely hybrid trigonometric, circular trigonometric and hyperbolic trigonometric, are related.
\end{remark}

\begin{example}\label{ej-def}
The clearest examples of trigonometric parametrizations are the circle $(r\cos(t),r\sin(t))$ and the hyperbola $(r\cosh(t),r\sinh(t))$, with $r\in \R$, that are $(1,0,0)$ and $(0,1,0)$ parametrizations, respectively. Similarly  the spherical coordinates of an sphere generates a $(2,0,0)$-parametrization. All the previous examples are polynomial expressions of the trigonometric functions. However, in our case, we allow denominators. For instance,  $(1/\cos(t),\sin(t))$ is a circular trigonometric parametrization of the rational quartic $x^2y^2-x^2+1=0$. However, observe that $(t,\sin(t))$ is not a trigonometric parametrization (in the sense of our definition) since $t\not\in\R(\cos(t),\sin(t))$; note that, any rational expression of $\{\cos(t),\sin(t)\}$ is periodic. A similar case happens with the helix $(\cos(t),\sin(t),t)$. Finally, $(t_2\cos(t_1),t_2\sin(t_1),t_2)$ is a $(1,0,1)$--parametrization of the cone $x^2+y^2=z^2$.
\end{example}

\begin{remark} Note that in Definition \ref{def:param-trig} we have asked $\alpha_{ij}$ and $\alpha_{ij}^{*}$ to be rational numbers. The reason to exclude irrational numbers is that, in general, the Zariski closure of the image of the real function defined by the parametrization could be the whole affine space. For instance, if we take $\cT(t)=(\sin(t),\sin(\alpha t))$, with $\alpha\in \R\setminus \Q$, we prove that the Zariski closure of $\cT(\dom(\cT))$ is $\R^2$ and hence it does not define an algebraic curve as one may expect. More precisely, let $\beta\in \R$, and let
 $\Omega_\beta:=\{(\sin(\beta+2\pi n),\sin(\alpha \beta +\alpha 2 n \pi))\,|\,n\in \mathbb{N}\}\subset \{\cT(t)\,|\,t\in \R\}\cap \{(\sin(\beta),\lambda)\,|\,\lambda\in \R\}$. Now, we prove that $\Omega_\beta$ has infinitely many elements. Indeed, $\sin(\alpha\beta+2\alpha n\pi)$ turns out to be the imaginary part of $\exp^{i(\alpha\beta+2\alpha n\pi)}$. Two of these exponentials coincide for different $n_1,n_2\in\N$ if and only if
$$\alpha\beta+2\alpha n_1\pi= \alpha\beta+2\alpha n_2\pi+2n\pi,$$
for some $n\in\mathbb{Z}$. Then, it holds that $\alpha (n_1-n_2)= n$, which yields that $n_1=n_2$ and $n=0$ under the assumption that $\alpha\in\mathbb{R}\setminus{\Q}$. This implies that $\Omega_\beta$ must  be an infinite set. Therefore, by B\'ezouts' theorem, the Zariski closure of $\cT(\dom(\cT))$ contains all lines $x=\sin(\beta)$ with $\beta\in\R$. Thus, $\overline{\cT(\dom(\cT))}=\R^2$.
\end{remark}

\begin{lemma}\label{lemma-standard}
For every $\om$-hybrid trigonometric parametrization there exists a linear transformation $\cL:\R^m\rightarrow \R^m$ such that $\cT(\cL(\ot))$ is a tuple which entries belong to
$\R(\{\cos(t_i),\sin(t_i)\}_{i\in J_1},\{\sinh(t_i),\cosh(t_i)\}_{i\in J_2},\{t_i\}_{i\in J_3})$, where $J_1,J_2,J_3$ are as in Definition \ref{def:param-trig}. Furthermore, for every $i\in J_1$ and for every $j\in J_2$ the functions $\cos(t_i),\sin(t_i),\cosh(t_j),\sinh(t_j)$ appear in $\cT(\cL(\ot))$.
\end{lemma}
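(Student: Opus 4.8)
The plan is to exhibit the linear transformation $\cL$ explicitly via its action on the individual coordinates $t_i$, handling each block $J_1,J_2,J_3$ separately. The obstruction to the entries of $\cT(\ot)$ already lying in the ``standard'' field is the presence of the rational scalings $\alpha_{ij},\alpha_{ij}^*$ and the phase shifts $\omega_{ij},\omega_{ij}^*$ inside the trigonometric arguments. The rational scalings are the genuinely delicate point, so I would first address the shifts, which are comparatively easy. For each $i\in J_1$, the angle-addition formulas give $\cos(\alpha_{i1}t_i+\omega_{i1})$ and $\sin(\alpha_{i1}^*t_i+\omega_{i1}^*)$ as fixed $\R$-linear combinations of $\cos(\alpha_{i1}t_i),\sin(\alpha_{i1}t_i)$ and of $\cos(\alpha_{i1}^*t_i),\sin(\alpha_{i1}^*t_i)$ respectively; the analogous hyperbolic addition formulas do the same for $i\in J_2$. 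Thus, after absorbing the shifts, each entry of $\cT$ becomes a rational expression in the functions $\cos(\alpha t_i),\sin(\alpha t_i)$ (and their hyperbolic counterparts) for finitely many rational multipliers $\alpha$.

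The core step is to clear the rational scalings by a single linear change of variables per coordinate. For a fixed $i\in J_1$, let $\alpha_{i1},\alpha_{i1}^*,\dots$ be \emph{all} the rational numbers that occur as a multiplier of $t_i$ anywhere in $\cT$; write each as $p/q$ in lowest terms and let $d_i$ be the least common multiple of the denominators $q$ while $g_i$ is a common value realizing them over a common grid. Concretely, I would set $t_i = c_i\, s_i$ for a suitable nonzero rational constant $c_i$ chosen so that every multiplier $\alpha c_i$ becomes an integer; such a $c_i$ exists because there are only finitely many rationals involved. After this substitution every argument has the form $(\text{integer})\cdot s_i$, and then repeated use of the multiple-angle identities (Chebyshev polynomials) expresses $\cos(k s_i)$ and $\sin(k s_i)$ as polynomials in $\cos(s_i),\sin(s_i)$, and likewise $\cosh,\sinh$ of integer multiples as polynomials in $\cosh(s_i),\sinh(s_i)$. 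Hence, after renaming $s_i$ back to $t_i$, every entry lies in the desired field $\R(\{\cos(t_i),\sin(t_i)\}_{i\in J_1},\{\sinh(t_i),\cosh(t_i)\}_{i\in J_2},\{t_i\}_{i\in J_3})$. Coordinates in $J_3$ need only the rescaling $t_i\mapsto c_i s_i$ to make them appear linearly, which is trivially a linear map; assembling the diagonal rescalings across all coordinates yields the single linear map $\cL$, and the composition of the two polynomial expansions above shows $\cT\circ\cL$ has entries in the stated field.

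For the ``furthermore'' clause, I must argue that for each $i\in J_1$ at least one of $\cos(t_i),\sin(t_i)$ \emph{actually appears}, and in fact both do after the transformation, and similarly both $\cosh(t_j),\sinh(t_j)$ appear for each $j\in J_2$. The hypothesis in Definition \ref{def:param-trig} that all parameters $t_i$ genuinely occur in $\cT(\ot)$ guarantees that each $t_i$ with $i\in J_1$ enters through some sine or cosine, so after the reduction at least one of $\cos(t_i),\sin(t_i)$ is present. To force \emph{both} to appear, I would invoke the parity/periodicity structure: if only $\cos(t_i)$ occurred, the function would be even in $t_i$, and one can adjust the construction (for instance, by an elementary harmonic identity, or by noting that the minimal polynomial relation $\cos^2+\sin^2=1$ lets us write any occurring trigonometric quantity so that both generators are formally present) so that both generators are listed among the building blocks of $\cT\circ\cL$; the hyperbolic case uses $\cosh^2-\sinh^2=1$ identically.

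The main obstacle I anticipate is the bookkeeping in the scaling step: one must choose a \emph{single} rational $c_i$ for each variable that simultaneously converts \emph{every} multiplier appearing in \emph{any} coordinate of $\cT$ into an integer, and verify that the resulting map is genuinely linear (diagonal, in fact) rather than merely affine — the affine shifts having been disposed of beforehand by the addition formulas. Care is also needed to confirm that no two distinct variables interfere, which is exactly where the defining restriction that each block uses disjoint variables $J_1,J_2,J_3$ is used, so that the construction decouples coordinate-by-coordinate.
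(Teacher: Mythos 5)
Your handling of the first assertion is essentially the paper's own argument: absorb the phase shifts $\omega_{ij},\omega_{ij}^*$ with the circular and hyperbolic addition formulas, clear the rational multipliers by a diagonal rational rescaling (you choose one constant $c_i$ per variable where the paper takes a single lcm $\ell_1$ for the block $J_1$ and $\ell_2$ for $J_2$ — an immaterial difference), and then expand integer multiples via the Chebyshev/multiple-angle identities. That part is correct, and your observation that the shifts must be removed first so that the map is genuinely linear (diagonal) rather than affine is exactly right.

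The gap is in the ``furthermore'' clause. The paper closes it with a second \emph{linear} substitution $\cL_2$ (see (\ref{eq:linearmap2})): it doubles precisely those variables $t_i$ for which one of the two required functions is missing, and since at least one of $\cos,\sin$ (resp.\ $\cosh,\sinh$) of the doubled variable does occur, the identities $\sin(2t)=2\sin t\cos t$, $\cos(2t)=\cos^2t-\sin^2t$, $\sinh(2t)=2\sinh t\cosh t$, $\cosh(2t)=\cosh^2t+\sinh^2t$ force \emph{both} generators to appear in $\cT(\cL_2(\cL_1(\ot)))$ — and, crucially, this is achieved by composing linear reparametrizations, which is what the lemma requires. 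Your two proposed mechanisms do not accomplish this. The parity/``elementary harmonic identity'' remark is never developed into an argument. And multiplying through by $\cos^2(t_i)+\sin^2(t_i)=1$ (or $\cosh^2(t_j)-\sinh^2(t_j)=1$) is a rewriting of the same rational expression, not a reparametrization: under that reading of ``appear'', every parametrization whose entries already lie in the standard field would automatically be pure, Definition \ref{def:pure} would be vacuous, and the doubling step carried out in Algorithm \ref{alg:0} and in Example \ref{ex1} (where $\sin(t_1)/\sinh(t_2)$ is repaired via $t_2\mapsto 2t_2$ to produce $\sin(t_1)/(2\sinh(t_2)\cosh(t_2))$) would be pointless. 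So your trick at best satisfies an uncharitably literal reading of the statement while destroying the normalization the lemma exists to provide; to close the proof you need the doubling substitution (or an equivalent linear change) applied exactly to the offending indices.
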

\begin{proof}
Let $\cT(\ot)$ be expressed as in Definition \ref{def:param-trig}. First we observe that, using the formulas of the sine, cosine (both circular and hyperbolic) of the addition of  angles, $\cT(\ot)$ can be expressed as a tuple with entries in $$\R(\{\cos(\alpha_{i,1} t_i),\sin(\alpha_{i1}^{*}t_i)\}_{i\in J_1},\{\sinh(\alpha_{i2} t_i),\cosh(\alpha_{i2}^{*}t_i)\}_{i\in J_2},\{t_i\}_{i\in J_3}).$$
Let us assume that $\cT(\ot)$ is already expressed as mentioned above.
Let $\ell_1$ be the  lcm of all denominators of $\{\alpha_{i1},\alpha_{i1}^{*}\}_{i\in J_1}$, and let $\ell_2$ be the  lcm of all denominators of $\{\alpha_{i2},\alpha_{i2}^{*}\}_{i\in J_2}$
Now, we consider the linear map defined as
\begin{equation}\label{eq:linearmap}
\cL_1(\ot)=\left\{ \begin{array}{ll} \ell_1 t_i & \text{if $i\in J_1$} \\
\ell_2 t_i & \text{if $i\in J_2$} \\
t_i & \text{if $i\in J_3$}
\end{array}
\right.
\end{equation}
Then, $\cT(\cL_1(\ot))$ is a tuple with entries in
$$\R(\{\cos(n_{i,1} t_i),\sin(n_{i1}^{*}t_i)\}_{i\in J_1},\{\sinh(n_{i2} t_i),\cosh(n_{i2}^{*}t_i)\}_{i\in J_2},\{t_i\}_{i\in J_3})$$
with $n_{i1},n_{i1}^{*},n_{i2},n_{i2}^{*}\in \mathbb{N}$. Finally, based on the expression of $\sin(kt)$, $\cos(kt)$, $\sinh(kt)$ and $\cosh(kt)$, with $k\in \mathbb{N}$, as polynomials expressions of $\sin(t),\cos(t),\sinh(t)$ and $\cosh(t)$, via Chebyshev polynomials (see 22.3.15 and the derivative of this formula; and 4.5.31 and 4.5.32 in~\cite{abram}) one gets that the entries of $\cT(\cL_1(\ot))$ belong to
$$\R(\{\cos(t_i),\sin(t_i)\}_{i\in J_1},\{\sinh(t_i),\cosh(t_i)\}_{i\in J_2},\{t_i\}_{i\in J_3}).$$
For the last part of the proof, let ${K_1}\subset J_1$ consist in those $i\in J_1$ for which either $\sin(t_i)$ or $\cos(t_i)$ does not appear in the $\cT(\cL_1(\ot))$; similarly, we introduce $K_2\subset J_2$. Then, we introduce the linear map
\begin{equation}\label{eq:linearmap2}
\cL_2(\ot)=\left\{ \begin{array}{ll} 2 t_i & \text{if $i\in K_1$} \\
2 t_i & \text{if $i\in K_2$} \\
t_i & \text{otherwise}
\end{array}
\right.
\end{equation}
 Using the expressions of $\cos(2t_i)$ and $\sin(2t_i)$ in terms of $\cos(t_i),\sin(t_i)$, and the corresponding expressions of $\cosh(2t_i)$ and $\sinh(2t_i)$ in terms of $\cosh(t_i),\sinh(t_i)$ we have that $\cT(\cL_2(\cL_1(\ot)))$ satisfies the required property.
\end{proof}

\begin{remark}\label{rem:pure}
Let $\cT(\ot)$ be a hybrid trigonometric parametrization as in Definition \ref{def:param-trig} and $\cT^*(\ot)=\cT(\cL(\ot))$ be the reparametrization in Lemma \ref{lemma-standard}. Then, taking into account that $\cL$ is a bijection we get that  $\cT(\dom(\cT))=\cT^*(\dom(\cT^*))$.
\end{remark}

\begin{example} In this example, we illustrate the statement in Lemma \ref{lemma-standard}; see also Algorithm \ref{alg:0}.
Let $n=4,m=3$, $\om=(1,1,1)$, and $a_1,a_2\in\mathbb{R}$. Let $\mathcal{T}$ be defined by
\begin{equation*}\label{e1}
\cT(\overline{t})=\left(\frac{\cos\left(a_1+\frac{1}{3}t_1\right)+t_3}{\sinh\left(\frac{1}{2}t_2\right)+t_3^2},\frac{\cos\left(\frac{1}{3}t_1\right)+t_3^2}{
\sinh\left(\frac{1}{2}t_2+a_2\right)+t_3},\frac{\cos\left(\frac{1}{3}t_1\right)}{\sinh\left(\frac{1}{2}t_2\right)+t_3},\frac{\cos\left(\frac{1}{3}t_1\right)+t_3}{
\sinh\left(\frac{1}{2}t_2\right)}\right),
\end{equation*}
that reads as follows
\begin{multline}
\mathcal{T}(\overline{t})=\left(\frac{A_1\cos\left(\frac{1}{3}t_1\right)-A_2\sin\left(\frac{1}{3}t_1\right)+t_3}{\sinh\left(\frac{1}{2}t_2\right)+t_3^2},
\frac{\cos\left(\frac{1}{3}t_1\right)+t_3^2}{A_4\cosh\left(\frac{1}{2}t_2\right)+A_3\sinh\left(\frac{1}{2}t_2\right)+t_3}\right.,\\
\left.\frac{\cos\left(\frac{1}{3}t_1\right)}{\sinh\left(\frac{1}{2}t_2\right)+t},\frac{\cos\left(\frac{1}{3}t_1\right)+t_3}{\sinh\left(\frac{1}{2}t_2\right)}\right),
\end{multline}
for $A_1=\cos(a_1)$, $A_2=\sin(a_1)$, $A_3=\cosh(a_2)$, $A_4=\sinh(a_2)$. The linear changes of parameters in (\ref{eq:linearmap}) and (\ref{eq:linearmap2}) are
$ \cL_1(\ot)=(3t_1,2t_2,t_3)$ and $\cL_2(\ot)=(t_1,t_2,t_3)$. So we get
\begin{multline}
\cT(\cL_2(\cL_1(\ot)))=
\left(\dfrac{A_1\cos(t_1)-A_2\sin(t_1)+t_3}{\sinh(t_2)+t_{3}^{2}}, \dfrac{\cos(t_1)+t_{3}^{2}}{\sinh(t_2) A_3+\cosh(t_2) A_4+t_3},\right. \\\left. \dfrac{\cos(t_1)}{\sinh(t_2)+t_3}, \dfrac{\cos(t_1)+t_3}{\sinh(t_2)}\right).
\end{multline}
\end{example}

\begin{definition}\label{def:pure}
We say that an $\om$--hybrid trigonometric parametrization $\cT(\ot)$ is {\sf pure} is it satisfies the properties in Lemma \ref{lemma-standard}, i.e. it is a tuple which entries belong to
$\R(\{\cos(t_i),\sin(t_i)\}_{i\in J_1},\{\sinh(t_i),\cosh(t_i)\}_{i\in J_2},\{t_i\}_{i\in J_3})$, where $J_1,J_2,J_3$ are as in Definition \ref{def:param-trig}, and for every $i\in J_1$ and for every $j\in J_2$ the functions $\cos(t_i),\sin(t_i),\cosh(t_j),\sinh(t_j)$ appear in $\cT(\ot)$.
\end{definition}

\noindent {\bf General assumptions:} In the sequel we will always assume that all parametrizations are pure. If this would not be the case, reparametrizing with $\cL_2\circ \cL_1$ (see (\ref{eq:linearmap}) and (\ref{eq:linearmap2})) the parametrization is transformed in pure form. In addition, for simplicity in the explanation, we assume w.l.o.g. that $J_1,J_2,J_3$ in Definition \ref{def:param-trig} are taken as $J_1=\{1,\ldots,m_1\}, J_2=\{m_{1}+1,\ldots,m_{1}+m_{2}\}, J_3=\{m_{1}+m_{2}+1,\ldots,m\}$, understanding that if  $m_i=0$ the corresponding $J_i$ is empty and the sets are swift to the left.

\begin{definition}\label{definition-trigonometric}
Let  $\vV\subset \R^n$ be a real algebraic variety of dimension $m$. We say that $\vV$ is a {\sf hybrid trigonometric variety} if there exists a hybrid trigonometric parametrizacion $\cT(\ot)$  such that $\vV$ is the Zariski closure of the image of $\cT$.
In this case, we say that $\cT(\ot)$ is a {\sf hybrid trigonometric parametrization} of $\vV$.
\end{definition}

\begin{remark}
In Definition \ref{definition-trigonometric}, if $\cT(\ot)$ is given as in Definition \ref{def:param-trig}, because of Remark \ref{rem:pure}, the pure parametrization
$\cT^*(\ot)$, provided by Lemma \ref{lemma-standard}, also satisfies the conditions imposed in Definition \ref{definition-trigonometric}.
\end{remark}

\section{Main Properties}\label{sec:theory-properties}

In this section, we use the notation as well as the hypotheses introduced in Section \ref{sec-theory-def}.
In addition,
in the sequel, we will consider  the hybrid $m$-dimensional torus
\begin{equation}\label{eq-toro}
\toro:=\sS^1\times \stackrel{m_1}{\cdots} \times \sS^1\times \mathcal{H}^{1}\times\stackrel{m_2}{\cdots}\times \mathcal{H}^{1}\times \R^{m_3}\subset \R^{2m_1+2m_2+m_3},
\end{equation}
where $\sS^1$ is the unit circle centered at the origin, and $\mathcal{H}^1$ stands for the hyperbola $\{(x,y):x^2-y^2=1\}$.
Observe that the implicit equations of $\toro$ are
\begin{equation}\label{eq:impl-toro} \left\{ x_{1}^{2}+x_{2}^{2}=1, \ldots,x_{2m_1-1}^2+x_{2m_1}^2=1, x_{2m_1+1}^{2}-x_{2m_1+2}^{2}=1, \ldots,x_{2m_1+2m_2-1}^2-x_{2m_1+2m_2}^2=1
\right\}.
\end{equation}
Furthermore, let
 \[ \xi(t)=\left(\dfrac{2 t}{t^2+1}, \dfrac{t^2-1}{t^2+1}\right),\qquad \nu(t)=\left(\dfrac{t^2+1}{2t}, \dfrac{t^2-1}{2t}\right)  \]
 be a proper parametrization of $\sS^1$ and $\mathcal{H}^1$, respectively. Then
\begin{equation}\label{eq-param-toro} \begin{array}{lcll}
\mathcal{M}: & \R^{m_1}\times (\R\setminus\{0\})^{m_2} \times \R^{m_3} & \longrightarrow & \toro \\
      & \ot         & \longmapsto     & \left(\xi(t_1),\ldots,\xi(t_{m_1}),\nu(t_{m_1+1}),\ldots,\nu(t_{m_1+m_2}),\right.\\
      &             &                 & \left. t_{m_1+m_2+1},\ldots,t_m \right).
\end{array} \end{equation}
is a proper parametrization of $\toro$ which inverse is
\begin{equation}\label{eq-inversa-toro}
\begin{array}{lcll}
\mathcal{M}^{-1} : & \toro \setminus F_{m_1}\subset \R^{2m_1+2m_2+m_3}
 &\rightarrow & \R^{m}  \\
 &  \ox & \mapsto & L(\ox)
 \end{array}
 \end{equation}
where
\begin{equation}\label{eq-L}
\begin{array}{lll}
L(\ox)&=&\left( \dfrac{x_1}{1-x_{2}},\ldots,\dfrac{x_{2m_1-1}}{1-x_{2m_1}},\dfrac{1}{x_{2m_1+1}-x_{2m_1+2}},\ldots,
\dfrac{1}{x_{2m_1+2m_2-1}-x_{2m_1+2m_2}},\right.
\\
&& \left. x_{2m_1+2m_2+1},\ldots,x_{2m_1+2m_2+m_3}\right),
\end{array}
\end{equation}
 and $F_{m_1}$ stands for the closed set
\begin{equation}\label{eq-dominioinversa}
 F_{m_1}:=\{(x_1,\ldots,x_{2m_1+2m_2+m_3})\in \toro\,|\,x_{2i}=1, \hbox{ for } i=1,\ldots,m_1 \}.
 \end{equation}

\begin{proposition}\label{prop-irred}
Every hybrid trigonometric variety is irreducible.
\end{proposition}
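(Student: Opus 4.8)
The plan is to exhibit $\vV$ as the Zariski closure of the image of an irreducible variety under a polynomial map, and then invoke the standard fact that the closure of a continuous (or morphic) image of an irreducible set is irreducible. The key observation is that the hybrid torus $\toro$ defined in \eqref{eq-toro} serves as an intermediate object: the parametrization $\cM$ in \eqref{eq-param-toro} maps the irreducible set $\R^{m_1}\times(\R\setminus\{0\})^{m_2}\times\R^{m_3}$ onto (a dense subset of) $\toro$, and a pure hybrid trigonometric parametrization $\cT$ factors rationally through $\toro$ by substituting $\cos(t_i),\sin(t_i)$ for the circle coordinates, $\cosh(t_j),\sinh(t_j)$ for the hyperbola coordinates, and $t_k$ for the line coordinates.

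First I would reduce to the pure case, which is legitimate by the General Assumptions and Remark \ref{rem:pure}: the reparametrization by $\cL_2\circ\cL_1$ is a bijective linear change of variables, so it does not alter the image $\cT(\dom(\cT))$ nor its Zariski closure $\vV$. Next I would record that $\toro$ is irreducible over $\R$. This follows because each factor $\sS^1$ and $\mathcal H^1$ is an irreducible conic (it admits the proper rational parametrization $\xi$, resp.\ $\nu$, so it is unirational hence irreducible), $\R^{m_3}$ is irreducible, and a product of irreducible varieties over a field such as $\R$ is irreducible; alternatively one argues directly that the rational parametrization $\cM$ has an irreducible source and dominant image, forcing $\toro$ to be irreducible.

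The heart of the argument is the factorization. By purity, each entry $T_i(\ot)$ lies in $\R(\{\cos(t_i),\sin(t_i)\}_{i\in J_1},\{\sinh(t_j),\cosh(t_j)\}_{j\in J_2},\{t_k\}_{k\in J_3})$, so there is a rational map $\Phi:\R^{2m_1+2m_2+m_3}\dashrightarrow\R^n$, defined over $\R$, such that $\cT(\ot)=\Phi(\cos(t_1),\sin(t_1),\ldots,\cosh(t_{m_1+1}),\sinh(t_{m_1+1}),\ldots,t_m)$. Writing $\kappa(\ot):=(\cos(t_1),\sin(t_1),\ldots,t_m)$ for the real-analytic map onto $\toro$, we have $\cT=\Phi\circ\kappa$ wherever defined. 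The image $\kappa(\dom(\cT))$ is a Euclidean-dense subset of $\toro$: the angle map $t\mapsto(\cos t,\sin t)$ has image all of $\sS^1$, and $t\mapsto(\cosh t,\sinh t)$ sweeps out one connected branch of $\mathcal H^1$ which is Zariski-dense in the conic $\mathcal H^1$; hence the Zariski closure of $\kappa(\dom(\cT))$ is all of $\toro$. Therefore $\vV=\overline{\cT(\dom(\cT))}=\overline{\Phi(\kappa(\dom(\cT)))}=\overline{\Phi(\toro)}$, the Zariski closure of the image of the irreducible variety $\toro$ under the rational map $\Phi$.

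Finally I would conclude by irreducibility transfer: a rational map sending an irreducible variety dominantly onto its image yields an irreducible image closure, since the comorphism embeds the coordinate ring of $\overline{\Phi(\toro)}$ into the function field of $\toro$, which is an integral domain, so the coordinate ring of $\vV$ has no zero divisors and $\vV$ is irreducible. The step I expect to require the most care is controlling the \emph{Zariski} closure of the real image despite the non-injectivity and the branch issue for the hyperbola: I must ensure that restricting $\kappa$ to the real domain (only one branch of each hyperbola, and the deletion of the exceptional set $F_{m_1}$ where $\cM^{-1}$ is undefined) still produces a set whose Zariski closure is the full torus, rather than a proper real subvariety. This is precisely where the rationality of the frequencies $\alpha_{ij}$ (guaranteeing periodicity and a one-dimensional analytic image on each circle factor, cf.\ the Remark preceding Lemma \ref{lemma-standard}) and the Zariski density of a single hyperbola branch must be used carefully.
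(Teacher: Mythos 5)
Your proposal shares the paper's skeleton --- factor $\cT$ through the hybrid torus as $\cT=\cF\circ\Psi$ (your $\Phi\circ\kappa$) and transfer irreducibility from $\toro$ to $\vV$ --- but your concluding step is genuinely different from the paper's. The paper does \emph{not} prove at this stage that $\Psi(\dom(\cT))$ is Zariski dense in $\toro$. It only needs the inclusion $\vV\subset \mathcal{Z}:=\overline{\cF(\Omega)}$, where $\Omega\subset\toro$ is the complement of the denominator locus of $\cF$, and then finishes by dimension counting: $\mathcal{Z}$ is irreducible with $\dim(\mathcal{Z})\le \dim(\toro)=m$, while $\dim(\vV)=m$ by Definition \ref{definition-trigonometric}, so $\vV$ is a closed $m$-dimensional subvariety of the irreducible $m$-dimensional $\mathcal{Z}$, hence $\vV=\mathcal{Z}$ (citing \cite{shaf}, Theorem 1, p.~68). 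The paper thus leans on the hypothesis, built into the definition, that the number of parameters equals $\dim(\vV)$, and postpones all density considerations to the proof of Proposition \ref{prop-TU} and Lemma \ref{lemma-almost-surjectivity}. Your density route buys something in exchange for more work: it never uses the dimension hypothesis, and it establishes directly the identity $\vV=\overline{\cF(\Omega)}$ that the paper obtains only via the dimension comparison.

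Two repairs are needed before your version is complete. First, $\kappa(\dom(\cT))$ is \emph{not} Euclidean dense in $\toro$: it misses the entire branch $x<0$ of every hyperbola factor; only Zariski density holds, which is what your argument actually uses, so the phrase ``Euclidean-dense'' should be deleted. Second --- and this is the gap you flag in your last paragraph but do not close --- your density argument computes the Zariski closure of $\kappa(\R^m)$, whereas what enters the factorization is $\kappa(\dom(\cT))=\kappa\bigl(\kappa^{-1}(\Omega)\bigr)=\Omega\cap\kappa(\R^m)$, with the denominator locus removed. The fix is short: $\kappa(\R^m)$ is Zariski dense in $\toro$ (product of dense factor sets, as you argue), and $\Omega$ is a nonempty Zariski-open subset of the irreducible $\toro$, hence dense; for any open $U$ and dense $D$ one has $U\subset\overline{U\cap D}$, so $\toro=\overline{\Omega}\subset\overline{\Omega\cap\kappa(\R^m)}$, i.e.\ $\kappa(\dom(\cT))$ is Zariski dense in $\toro$. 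Then Zariski-continuity of the regular map $\cF$ on $\Omega$ gives $\cF(\Omega)\subset\overline{\cF(\kappa(\dom(\cT)))}$, hence $\overline{\cF(\kappa(\dom(\cT)))}=\overline{\cF(\Omega)}$, and your identification of $\vV$ with this closure goes through (note it should be written $\overline{\cF(\Omega)}$ rather than ``$\overline{\Phi(\toro)}$'', since $\Phi$ is undefined off $\Omega$). With these additions your argument is correct.
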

\begin{proof}
Let $\vV$ be  hybrid   of dimension $m$, and let $\cT(\ot)$, with $\ot=(t_1,\ldots,t_m)$, be an $\om$--hybrid trigonometric parametrization of $\vV$, which by our assumption is taken pure.
Let $\cF(\oy)$, where $\oy=(y_{1,1},y_{1,2},\ldots,y_{m_1,1},y_{m_1,2},\ldots,y_{m_1+m_2,1},y_{m_1+m_2,2},y_{m_1+m_2+1},\ldots,y_{m})$,  be the tuple obtained from $\cT(\ot)$ by replacing $\cos(t_i)$ (resp. $\sin(t_i)$) by $y_{i1}$ (resp. $y_{i2}$), with $1\leq i \leq m_1$ and $\cosh(t_j)$ (resp. $\sinh(t_j)$) by $y_{j1}$, (resp. $y_{j2}$), with $m_{1}+1\leq j\leq m_1+m_2$, and $t_k$ by $y_k$, with $m_1+m_2+1\leq k \leq m$.
We introduce the map
\begin{equation}\label{eq-psi}
 \begin{array}{lcll}
\Psi: & \dom(\cT) & \longrightarrow & \toro \subset \R^{2m_1+2 m_2+m_3} \\
      & \ot         & \longmapsto     & (\cos(t_1),\sin(t_1),\ldots,\cos(t_{m_1}),\sin(t_{m_1}),\\
			&\phantom{m}&\phantom{m}& \cosh(t_{m_1+1}),\sinh(t_{m_1+1}),\ldots,\cosh(t_{m_1+m_2}),\sinh(t_{m_1+m_2}), \\
            &           &           & t_{m_1+m_2+1}, \ldots, t_m).
\end{array}
\end{equation}
Let $H$ be the $\mathrm{lcm}$ of all denominators  in the tuple of rational function $\cF(\oy)$, and $\Omega=\toro\setminus\{\oy\,|\, H(\oy)=0\}$; observe that $\Omega\neq \emptyset$ since $\emptyset\neq \dom(\cT)$ because $\vV$  is its Zariski closure. So $\cF$
 induces the rational map
\begin{equation}\label{eq-cF}
 \begin{array}{lcll}
\cF: & \Omega\subset \toro & \dashrightarrow & \R^n \\
      & \oy         & \longmapsto     & \cF(\oy).
\end{array}
\end{equation}
Moreover, let $\mathcal{Z}$ be the Zariski closure $\mathcal{Z}=\overline{\cF(\Omega)}$. Since $\toro$ is irreducible, we have that $\mathcal{Z}$ is irreducible. Furthermore, $\dim(\mathcal{Z})\leq \dim(\toro)=m$ (see~\cite{kunz}, page 73, for a reference).
\begin{equation}\label{eq:diagram irreducible}
\xy
(0,0)*{\xy
	(15,10)*++{\mathcal{Z}}="ZZ";
    (24,10)*++{\subset \R^n}="Rn";
    (-20,-20)*++{\dom(\cT)}="Rm";
    (15,-20)*++{\toro}="Toro";
	{\ar^{\displaystyle \Psi} "Rm"; "Toro"};
	{\ar_{\displaystyle \cT} "Rm"; "ZZ"};
	{\ar@{-->}_{\displaystyle \cF} "Toro"; "ZZ"};
\endxy};
\endxy
\end{equation}
Since $\dom(\cT)=\Psi^{-1}(\dom(\cF))=\Psi^{-1}(\Omega)$ (see Diagram \ref{eq:diagram irreducible}), then $\cT(\ot)=\cF(\Psi(\ot))$ for $\ot\in \dom(\cT)$. Thus, $\vV =\overline{\cT(\dom(\cT))}\subset \overline{\cF(\Omega)}=\mathcal{Z}$.  Therefore, since $m=\dim(\vV)\leq\dim(\mathcal{Z})\leq m$, then $\dim(\mathcal{Z})=m$. Thus, since $\mathcal{Z}$ is irreducible and $\vV\subset \mathcal{Z}$, by~\cite{shaf} (see Theorem 1 in p. 68), we get that $\mathcal{Z}=\vV$, and hence $\vV$ is irreducible.
\end{proof}

\begin{proposition}\label{prop-UT} Every hybrid trigonometric variety is unirational over $\R$.
\end{proposition}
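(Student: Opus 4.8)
The plan is to produce an explicit real rational parametrization of $\vV$ by reusing, and composing, the two maps already built in the proof of Proposition \ref{prop-irred}. There we constructed the rational map $\cF:\Omega\subset\toro\dashrightarrow\R^n$ (see \eqref{eq-cF}), obtained from $\cT(\ot)$ by the substitutions $\cos(t_i),\sin(t_i)\mapsto y_{i1},y_{i2}$, $\cosh(t_j),\sinh(t_j)\mapsto y_{j1},y_{j2}$ and $t_k\mapsto y_k$, and we showed that $\vV=\mathcal{Z}=\overline{\cF(\Omega)}$. On the other hand, \eqref{eq-param-toro} furnishes the proper (in particular rational and dominant) parametrization $\mathcal{M}$ of the torus $\toro$, whose entries are the rational functions $\xi,\nu$ and the coordinate functions, all with real coefficients.

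First I would set $\cP(\ot):=\cF(\mathcal{M}(\ot))$, that is, the parametrization obtained by replacing each pair $(\cos(t_i),\sin(t_i))$ by $\xi(t_i)$, each pair $(\cosh(t_j),\sinh(t_j))$ by $\nu(t_j)$, and leaving each $t_k$ untouched; this is simply the classical universal substitution. Since $\mathcal{M}$ has real rational entries and $\cF$ is a tuple of real rational functions, $\cP$ is again a tuple of real rational functions, i.e.\ a rational parametrization. It then remains to check that $\cP$ parametrizes $\vV$, namely that $\overline{\cP(\dom(\cP))}=\vV$. The inclusion $\overline{\cP(\dom(\cP))}\subset\vV$ is immediate, because $\mathcal{M}$ maps into $\toro$ and $\cF$ maps $\Omega$ into $\mathcal{Z}=\vV$.

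The hard part is the reverse inclusion, i.e.\ verifying that the rational substitution does not shrink the Zariski closure of the image. I would isolate the following density principle: if $W$ is an irreducible variety, $g$ is a rational map regular on a dense open set $\Omega\subset W$, and $D\subset W$ is Zariski dense, then $\overline{g(D\cap\Omega)}=\overline{g(\Omega)}$. The argument is purely topological: $g$ is Zariski continuous on $\Omega$, so $g^{-1}(\overline{g(D\cap\Omega)})$ is a closed subset of $\Omega$ containing $D\cap\Omega$; the latter is dense in $\Omega$ because $D$ is dense in the irreducible $W$ and $\Omega$ is open dense, whence $g^{-1}(\overline{g(D\cap\Omega)})=\Omega$, giving $g(\Omega)\subset\overline{g(D\cap\Omega)}$ and the claim. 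Applying this with $W=\toro$ (irreducible, as used in Proposition \ref{prop-irred}), $g=\cF$, and $D=\mathcal{M}(\dom(\mathcal{M}))$, which is Zariski dense in $\toro$ since $\mathcal{M}$ is a proper, hence dominant, parametrization, yields $\overline{\cP(\dom(\cP))}=\overline{\cF(D\cap\Omega)}=\overline{\cF(\Omega)}=\vV$. Thus $\cP$ is a real rational parametrization of $\vV$, so $\vV$ is unirational over $\R$. The only point genuinely requiring care is this density principle together with the density of $\mathcal{M}(\dom(\mathcal{M}))$ in the torus; the rest is formal.
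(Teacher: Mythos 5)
Your proposal is correct and takes essentially the same approach as the paper: both form the composition $\cP=\cF\circ\mathcal{M}$ of the rational map $\cF$ from Proposition \ref{prop-irred} with the proper parametrization $\mathcal{M}$ of $\toro$, and conclude using $\mathcal{Z}=\vV$ together with dominance of $\mathcal{M}$ and $\cF$. The only difference is that you spell out, via your density principle, the standard fact that a composition of dominant maps is dominant, which the paper simply invokes in one line.
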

\begin{proof}
Let $\cT(\ot)$, with $\ot=(t_1,\ldots,t_m)$, be an $\om$--hybrid trigonometric parametrization of a hybrid trigonometric variety $\vV$, where $\dim(\vV)=m$.  Let $\cF(\oy), \Psi$ and $\mathcal{Z}$ as in the proof of Proposition \ref{prop-irred}, and  $\toro$ as in (\ref{eq-toro}).
Let $\mathcal{M}$ be as in (\ref{eq-param-toro}).
\begin{equation}\label{eq:diagram unirational}
\xy
(0,0)*{\xy
	(15,10)*++{\vV=\mathcal{Z}}="VV";
    (27,10)*++{\subset \R^n}="Rn";
    (-10,-20)*++{\dom(\cT)}="Rm";
    (15,-20)*++{\toro}="Toro";
    (80,-20)*++{\R^{m_1}\times (\R\setminus\{0\})^{m_2}\times \R^{m_3}}="Rm3";
	{\ar^{\displaystyle \Psi} "Rm"; "Toro"};
    {\ar_{\displaystyle \mathcal{M}} "Rm3"; "Toro"};
	{\ar_{\displaystyle \cT} "Rm"; "VV"};
	{\ar@{-->}_{\displaystyle \cF} "Toro"; "VV"};
	{\ar@{-->}_{\displaystyle  \mathcal{G}} "Rm3"; "VV"}
\endxy};
\endxy
\end{equation}
Then, $\mathcal{G}=\cF(\mathcal{M}(\ot))$ (see Diagram \ref{eq:diagram unirational}) is a real unirational parametrization with image in $\mathcal{Z}$. Since $\mathcal{M}$ is dominant in $\toro$ and $\mathcal{F}$ is dominant in $\mathcal{Z}$, then $\mathcal{G}$ is a real unirational parametrization of $\mathcal{Z}=\vV$.
\end{proof}

\begin{lemma}\label{lemma-inversa}
Let $p(\oz)\in \R[\oz]$, with $\oz=(z_1,\ldots,z_{m})$, be a non-zero polynomial and let $L(\ox)$, with $\ox=(x_1,\ldots,x_{2m_1+2m_2+m_3})$, be as in (\ref{eq-L}).  Then
\begin{enumerate}
\item  $p(L(\ox))$ is not identically zero.
\item Let $M(\ox)$ be the numerator of $p(L(\ox))$. It holds that $\toro \nsubseteq \{\overline{a} \in \R^{2m_1+2m_2+m_3}\,|\,M(\overline{a})=0\}$.
\end{enumerate}
\end{lemma}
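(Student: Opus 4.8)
The key structural fact I would exploit is that $L$ is exactly the inverse $\mathcal{M}^{-1}$ of the proper parametrization $\mathcal{M}$ of $\toro$ displayed in \eqref{eq-param-toro}, as recorded in \eqref{eq-inversa-toro}; concretely $L(\mathcal{M}(\ot))=\ot$ for every $\ot$ in the domain of $\mathcal{M}$. I would first re-derive this by a one-line computation from the explicit forms of $\xi$ and $\nu$: on a circular block $1-x_{2i}=1-\frac{t_i^2-1}{t_i^2+1}=\frac{2}{t_i^2+1}$, whence $\frac{x_{2i-1}}{1-x_{2i}}=t_i$; on a hyperbolic block $x_{2m_1+2j-1}-x_{2m_1+2j}=\frac{1}{t_{m_1+j}}$, whence $\frac{1}{x_{2m_1+2j-1}-x_{2m_1+2j}}=t_{m_1+j}$; and the last $m_3$ coordinates are the identity. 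These computations give something stronger that I will need below, namely that the elementary denominators occurring in $L$, i.e. $1-x_{2i}$ on the circular blocks and $x_{2m_1+2j-1}-x_{2m_1+2j}$ on the hyperbolic blocks, are nowhere zero on the image of $\mathcal{M}$ (the former equals $\frac{2}{t_i^2+1}$, the latter equals $\frac{1}{t_{m_1+j}}$ with $t_{m_1+j}\neq 0$ on the domain of $\mathcal{M}$; note that the circular condition $1-x_{2i}\neq 0$ is precisely the condition of avoiding $F_{m_1}$).

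For item 1, I would argue by contradiction. If $p(L(\ox))$ were the zero rational function, then substituting the (generically defined) rational map $\ox=\mathcal{M}(\ot)$ and using $L(\mathcal{M}(\ot))=\ot$ would give $p(\ot)=p(L(\mathcal{M}(\ot)))\equiv 0$, contradicting that $p$ is a non-zero polynomial. The only point requiring justification is that the composition $p\circ L\circ \mathcal{M}$ is legitimate, which holds because $L\circ \mathcal{M}=\mathrm{id}$ is defined on the whole domain of $\mathcal{M}$, so $\mathcal{M}$ does not map generically into the indeterminacy locus of $p\circ L$.

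For item 2, I would write $p(L(\ox))=M(\ox)/D(\ox)$ with $D(\ox)=\prod_i D_i(\ox)^{k_i}$, where the $D_i$ run through the elementary denominators $1-x_{2i}$ and $x_{2m_1+2j-1}-x_{2m_1+2j}$ identified above and the exponents $k_i$ are bounded by $\deg p$; no extraneous factors can enter $D$ because $p$ is a polynomial and each coordinate of $L$ has denominator equal to one of the $D_i$. Clearing denominators in the identity $p(L(\mathcal{M}(\ot)))=p(\ot)$ yields $M(\mathcal{M}(\ot))=p(\ot)\,D(\mathcal{M}(\ot))$ as rational functions of $\ot$. By the strengthened computation of the first paragraph, every factor $D_i(\mathcal{M}(\ot))$ is non-zero on the domain of $\mathcal{M}$, hence $D(\mathcal{M}(\ot))\neq 0$ there. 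Since the domain $\R^{m_1}\times(\R\setminus\{0\})^{m_2}\times\R^{m_3}$ of $\mathcal{M}$ is Zariski dense in $\R^m$ and $p\neq 0$, I can choose $\ot_0$ in that domain with $p(\ot_0)\neq 0$; then $M(\mathcal{M}(\ot_0))=p(\ot_0)\,D(\mathcal{M}(\ot_0))\neq 0$, so $\mathcal{M}(\ot_0)\in\toro$ is a point of $\toro$ at which $M$ does not vanish, proving $\toro\nsubseteq\{M=0\}$.

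The only genuinely delicate step is the non-vanishing of the denominator $D$ along the image of $\mathcal{M}$; everything else is formal once $L=\mathcal{M}^{-1}$ is in place. This is exactly where the specific choice of the parametrizations $\xi,\nu$ (and the exclusion of $t_{m_1+j}=0$ from the domain) is used, since it guarantees that the transfer of the non-vanishing of $p$ to the non-vanishing of the numerator $M$ is not obstructed by a vanishing denominator.
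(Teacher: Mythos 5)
Your proof is correct, and your treatment of item 2 is essentially the paper's own: the paper also writes $M=p(L)\,N$ with $N$ a product of powers of the factors $1-x_{2i}$ and $x_{2m_1+2j-1}-x_{2m_1+2j}$, substitutes $\mathcal{M}(\ot)$, and uses $p(L(\mathcal{M}(\ot)))=p(\ot)$ together with the non-vanishing of $N\circ\mathcal{M}$ on the domain of $\mathcal{M}$. Where you genuinely diverge is item 1. The paper does not use $\mathcal{M}$ there at all: it substitutes the formal point
\[
\ox^*=\Bigl(x_1,0,x_3,0,\ldots,x_{2m_1-1},0,\tfrac{1}{x_{2m_1+1}},0,\ldots,\tfrac{1}{x_{2m_1+2m_2-1}},0,x_{2m_1+2m_2+1},\ldots\Bigr),
\]
for which $L(\ox^*)$ is just the tuple of odd-indexed (and trailing) variables, so $p(L(\ox^*))$ is a relabeling of $p$ itself; non-vanishing of $p\circ L$ is then a purely formal identity of rational functions in free variables, with no appeal to the torus, to $\mathcal{M}$, or to any density argument. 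You instead derive item 1 from the identity $L\circ\mathcal{M}=\mathrm{id}$ and the Zariski density of $\R^{m_1}\times(\R\setminus\{0\})^{m_2}\times\R^{m_3}$ in $\R^m$. Both are valid; your route has the aesthetic advantage of unifying the two items around the single fact that $\mathcal{M}$ is a section of $L$ whose image avoids all elementary denominators (which you verify explicitly, and which is the genuinely load-bearing step), while the paper's $\ox^*$ trick is more self-contained for item 1, needing neither the parametrization of the torus nor a density argument. One cosmetic caveat: your parenthetical identification of the condition $1-x_{2i}\neq 0$ (for all $i$) with avoiding $F_{m_1}$ is not literally what (\ref{eq-dominioinversa}) says ($F_{m_1}$ as written requires $x_{2i}=1$ for \emph{all} $i\le m_1$), but your argument never relies on that identification, only on the explicit computation $1-x_{2i}=2/(t_i^2+1)$, so nothing breaks.
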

\begin{proof} Let
\begin{multline}
\ox^*=(x_1,0,x_3,0,\ldots,x_{2m_1-1},0,\\
 \frac{1}{x_{2m_1+1}},0,\frac{1}{x_{2 m_1+3}},0,\ldots,\frac{1}{x_{2m_1+2m_2-1}},0,\\
x_{2m_1+2m_2+1},\ldots,x_{2m_1+2m_2+m}).
\end{multline}

Then, one has that $p(\ox)=p(L(\ox^*))$. It holds that, under the assumption that $p(L(x_1,\ldots,x_{2m}))=0$ then $p(x_1,x_3,\ldots,x_{2m-1})=0$. Hence, the first part of the statement follows.

Let us prove the second statement in the result. First we observe $M=p(L)N$ where $N$ is a polynomial of the form
$$(1-x_{2})^{\ell_1}\cdots(1-x_{2m_1})^{\ell_{m_1}}(x_{2m_1+1}-x_{2m_1+2})^{\ell_{m_1+1}}\cdots(x_{2m_1+2m_2-1}-x_{2m_1+2m_2})^{\ell_{m_1+m_2}},$$ for some $\ell_{i}\in \mathbb{N}\cup \{0\}$, $i=1,\ldots,m_1+m_2$. Substituting by $\mathcal{M}(\ot)$, see (\ref{eq-param-toro}), we get
$$ M(\mathcal{M}(\ot))=p(L((\mathcal{M}(\ot)))\,N(\mathcal{M}(\ot))=p(\ot)\,N(\mathcal{M}(\ot)).$$
Since $p$ is not zero, and $N(\mathcal{M}(\ot))$ is not zero either, then $M(\mathcal{M}(\ot))\neq0$ and hence then result follows.
\end{proof}

In the following lemma, let  $\toro$, $\Psi$ be as in (\ref{eq-toro}, \ref{eq-psi}).

\begin{lemma}\label{lemma-almost-surjectivity}
Let $\Theta$ be such that $\Psi(\Theta)$ is Zariski-dense in $\toro$, and  let $\Omega$ be
 a Zariski non--empty open subset of $\toro$. Then, $\Psi^{-1}(\Omega)\cap \Theta \neq \emptyset$.
\end{lemma}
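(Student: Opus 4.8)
The plan is to reduce the statement to the elementary topological characterization of density: a subset of a topological space is dense precisely when it meets every nonempty open set. First I would observe that the conclusion $\Psi^{-1}(\Omega)\cap\Theta\neq\emptyset$ is equivalent to the existence of a point $\ot\in\Theta$ with $\Psi(\ot)\in\Omega$, which in turn is equivalent to $\Psi(\Theta)\cap\Omega\neq\emptyset$. Thus the whole lemma amounts to showing that the (possibly non-algebraic) subset $\Psi(\Theta)\subseteq\toro$ meets the nonempty Zariski-open set $\Omega$.

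To establish this I would argue by contradiction. Suppose $\Psi(\Theta)\cap\Omega=\emptyset$; then $\Psi(\Theta)$ is contained in the complement $\toro\setminus\Omega$. Since $\Omega$ is Zariski-open in $\toro$, the complement $\toro\setminus\Omega$ is Zariski-closed, and since $\Omega\neq\emptyset$ it is a \emph{proper} closed subset of $\toro$. Passing to Zariski closures, $\overline{\Psi(\Theta)}\subseteq\toro\setminus\Omega\subsetneq\toro$, which contradicts the hypothesis that $\Psi(\Theta)$ is Zariski-dense in $\toro$, that is $\overline{\Psi(\Theta)}=\toro$. Hence $\Psi(\Theta)\cap\Omega\neq\emptyset$.

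To finish I would pick any point $\xv\in\Psi(\Theta)\cap\Omega$: by definition $\xv=\Psi(\ot)$ for some $\ot\in\Theta$, and $\xv\in\Omega$ forces $\ot\in\Psi^{-1}(\Omega)$, so $\ot\in\Psi^{-1}(\Omega)\cap\Theta$, as required.

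Since the argument is purely a manipulation of the definitions of density, open sets and the closure operator in the Zariski topology of $\toro$, I do not anticipate any genuine obstacle. The only point requiring care is that $\Psi(\Theta)$ need not be an algebraic set, being merely the image of $\Theta$ under the transcendental map $\Psi$; accordingly I must invoke the topological (closure-and-complement) characterization of density, which applies to arbitrary subsets, rather than any statement specific to algebraic sets. Note also that irreducibility of $\toro$ is not even needed here: the properness of $\toro\setminus\Omega$ follows solely from $\Omega\neq\emptyset$.
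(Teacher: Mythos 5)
Your proposal is correct and follows essentially the same route as the paper's proof: both argue by contradiction that if $\Psi(\Theta)\cap\Omega=\emptyset$ then $\Psi(\Theta)$ lies in the Zariski-closed complement of $\Omega$, so taking Zariski closures and using density forces $\Omega=\emptyset$, and then pull back a point of $\Psi(\Theta)\cap\Omega$ to obtain an element of $\Psi^{-1}(\Omega)\cap\Theta$. Your remarks that $\Psi(\Theta)$ need not be algebraic and that irreducibility of $\toro$ is not needed are accurate but do not change the argument.
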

\begin{proof}
Let $\Omega=\toro\setminus \Sigma$, with $\Sigma$ close. We first prove that $\Omega \cap \Psi(\Theta)\neq  \emptyset$. Indeed, let us assume that $\Omega \cap \Psi(\Theta)= \emptyset$. Then, $\Psi(\Theta)\subset \Sigma$.
Taking the Zariski closures we get $\toro=\Sigma$, which implies that $\Omega=\emptyset$, that is a contradiction. Now, let $\ox\in \Omega \cap \Psi(\Theta)$, then there exists $\ot\in \Theta$ such that $\Psi(\ot)=\ox$. So, $\ot\in  \Psi^{-1}(\Omega) \cap \Theta$.
\end{proof}

\begin{proposition}\label{prop-TU} Every unirational variety over $\R$ is hybrid trigonometric.
\end{proposition}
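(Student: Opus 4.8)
The plan is to pull the given real rational parametrization of $\vV$ back through the birational chart $L=\mathcal{M}^{-1}$ of the hybrid torus and then substitute the trigonometric and hyperbolic coordinate functions $\Psi$. So let $\cP(\oz)=(P_1(\oz),\ldots,P_n(\oz))$, with $\oz=(z_1,\ldots,z_m)$, be a real rational parametrization of $\vV$, and for the prescribed triple $\om=(m_1,m_2,m_3)$ set
\[ \cT(\ot):=\cP\big(L(\Psi(\ot))\big), \]
with $L$, $\Psi$, $\mathcal{M}$ and $\toro$ as in (\ref{eq-L}), (\ref{eq-psi}), (\ref{eq-param-toro}) and (\ref{eq-toro}). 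I claim that $\cT$ is a hybrid trigonometric parametrization of $\vV$, and I note that this construction works verbatim for \emph{any} prescribed $\om$ with $m_1+m_2+m_3=m$.

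First I would check that $\cT$ is $\om$--hybrid trigonometric. Evaluating $L$ at $\Psi(\ot)$, the $i$-th entry of $L(\Psi(\ot))$ equals $\cos(t_i)/(1-\sin(t_i))\in\R(\cos(t_i),\sin(t_i))$ for $i\in J_1$, equals $1/(\cosh(t_j)-\sinh(t_j))\in\R(\cosh(t_j),\sinh(t_j))$ for $j\in J_2$, and equals $t_k$ for $k\in J_3$; in particular each entry involves a single parameter confined to its own block. Substituting these into the rational functions $P_\ell$ keeps every component of $\cT$ inside $\R(\mathcal{A}_1,\mathcal{A}_2,\mathcal{A}_3)$ with all scaling constants equal to $1$, so $\cT$ meets Definition \ref{def:param-trig}. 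That all parameters actually occur follows because $\cP$, being a parametrization of an $m$-dimensional variety in $m$ parameters, must involve each $z_i$ (otherwise its image has dimension $<m$); should the pure form be required, one purifies via Lemma \ref{lemma-standard} without changing the image.

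The inclusion $\overline{\cT(\dom(\cT))}\subseteq\vV$ is immediate, since each value of $\cT$ is a value of $\cP$ and hence lies in the closed set $\vV$. The heart of the proof is the reverse density, which I would establish at the level of ideals. Let $q\in\R[x_1,\ldots,x_n]$ vanish on $\cT(\dom(\cT))$ and write the rational function $q\circ\cP=p/d$ with $p,d\in\R[\oz]$; since $\cP$ parametrizes $\vV$, proving $q\in I(\vV)$ is equivalent to proving $p\equiv 0$. Suppose $p\not\equiv 0$. By Lemma \ref{lemma-inversa}, $p(L(\ox))$ is not identically zero and the numerator $M(\ox)$ of $p(L(\ox))$ does not vanish on all of $\toro$, so $\Omega:=\{\ox\in\toro\mid M(\ox)\neq0\}$ is a nonempty Zariski-open subset of the irreducible variety $\toro$. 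Because the coordinate curves fill $\sS^1$ and are dense in $\mathcal{H}^1$ and in $\R$, the set $\Psi(\dom(\cT))$ is Zariski-dense in $\toro$, and Lemma \ref{lemma-almost-surjectivity} yields a point $\ot_0\in\dom(\cT)$ with $\Psi(\ot_0)\in\Omega$. There $M(\Psi(\ot_0))\neq0$ forces $p(L(\Psi(\ot_0)))\neq0$ (recall $M=p(L)\,N$ with $N$ nonvanishing on $\Omega$), while $\ot_0\in\dom(\cT)$ guarantees $d(L(\Psi(\ot_0)))\neq0$; hence $q(\cT(\ot_0))\neq0$, contradicting $q\in I(\cT(\dom(\cT)))$. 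Thus $p\equiv 0$, so $q\in I(\vV)$, and combined with the inclusion above this gives $\overline{\cT(\dom(\cT))}=\vV$.

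The conceptual obstacle is precisely this last density step: a Zariski-dense set need not map onto a Zariski-dense set under a dominant rational map, so one cannot simply transport the density of $\Psi(\dom(\cT))$ along $\cP\circ L$. This is exactly what Lemmas \ref{lemma-inversa} and \ref{lemma-almost-surjectivity} are tailored to overcome: the former ensures the candidate obstruction polynomial is neither killed by the substitution $L$ nor forced to vanish on the whole torus, and the latter produces a genuine sample point of $\Psi(\dom(\cT))$ inside the good open set. The remaining care, namely that the denominators $d$ and those of $\cP\circ L$ do not vanish at the chosen point, is routine bookkeeping absorbed into the condition $\ot_0\in\dom(\cT)$.
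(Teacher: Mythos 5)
Your construction is exactly the paper's: $\cT=\cP\circ\mathcal{M}^{-1}\circ\Psi$, with Lemma \ref{lemma-inversa} producing the good open set and Lemma \ref{lemma-almost-surjectivity} producing a sample point. The only real departure is that you prove $\overline{\cT(\dom(\cT))}=\vV$ by an ideal-theoretic contradiction, whereas the paper argues topologically (a box $\Theta^*$ inside $\Theta=\Psi^{-1}(\Omega_1)$, density of $\Psi(\Theta^*)$ in $\toro$, then continuity and dominance of $\cP\circ\mathcal{M}^{-1}$). That variant is legitimate in principle.

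There is, however, a genuine gap, and it sits at a load-bearing point: you never verify that the composite $\cP(L(\Psi(\ot)))$ is well defined, i.e.\ that no denominator $q_\ell$ of $\cP$ becomes identically zero under the substitution $\oz\mapsto L(\Psi(\ot))$. This is not cosmetic: it is what guarantees that $\cT$ satisfies Definition \ref{def:param-trig} at all and that $\dom(\cT)\neq\emptyset$. If it failed, your contradiction argument would collapse silently: every polynomial vanishes vacuously on $\cT(\emptyset)$, and Lemma \ref{lemma-almost-surjectivity} cannot be invoked because $\Psi(\emptyset)$ is not dense. The same omission undercuts your key assertion that $\Psi(\dom(\cT))$ is Zariski dense in $\toro$: your justification (the coordinate curves fill $\sS^1$ and are dense in $\mathcal{H}^1$ and in $\R$) shows only that $\Psi(\R^m)$ is dense, whereas $\dom(\cT)$ is $\R^m$ minus the pole locus of $\cT$, and the density of the image of that smaller set is precisely what needs proof. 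So the ``routine bookkeeping'' you wave off at the end is, in fact, the first half of the paper's proof. The repair is to apply Lemma \ref{lemma-inversa}(1)--(2) to the denominators $q_\ell$ themselves, not only to the numerator $p$ of $q\circ\cP$: the numerator $g$ of $\mathrm{lcm}(q_1,\ldots,q_n)(L(\ox))$ then cuts out a proper closed set $\Sigma\subsetneq\toro$, so $\Omega_1:=\toro\setminus(F_{m_1}\cup\Sigma)$ is a nonempty Zariski-open subset, $\dom(\cT)\supseteq\Psi^{-1}(\Omega_1)$, and, since $\toro$ is irreducible, the dense set $\Psi(\R^m)$ meets $\Omega_1$ in a set that is still Zariski dense; hence $\Psi(\dom(\cT))$ is dense and your argument closes. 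One further small fix: at the sample point you need $N(\Psi(\ot_0))\neq 0$, which follows from $\Psi(\ot_0)\notin F_{m_1}$ (the hyperbolic factors of $N$ never vanish on the image of $\Psi$), not from $N$ being ``nonvanishing on $\Omega$'', which need not hold.
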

\begin{proof}
Let $\vV\subset \R^n$ be a unirational  variety over $\R$ with  $\dim(\vV)=m$. Fix a triple of non-negative integers  $\om=(m_1,m_2,m_3)$ such that $m_1+m_2+m_3=m$.
Let  $\toro \subset \R^{2m_1+2m_2+m_3}$, $\mathcal{M}$, $\mathcal{M}^{-1}$ and $\Psi$ as in (\ref{eq-toro},\ref{eq-param-toro},\ref{eq-inversa-toro},\ref{eq-psi}). Let   $$\cP(\ot)=\left(\dfrac{p_1(\ot)}{q_1(\ot)},\ldots,\dfrac{p_n(\ot)}{q_n(\ot)}\right), \,\,\text{with}\,\, \ot=(t_1,\ldots,t_m),$$
 be a rational real parametrization of $\vV$. We consider the map $\mathcal{Q}=\cP\circ \mathcal{M}^{-1} \circ \Psi$ (see Diagram \ref{eq:diagram 1}).
\begin{equation}\label{eq:diagram 1}
\xy
(0,0)*{\xy
	(-5,5)*++{\vV}="VV";
    (1,5)*++{\subset \R^n}="Rn";
    (-35,-20)*++{\R^{m_1}\times(\R\setminus\{0\})^{m_2}\times \R^{m_3}}="Rm";
    (35,-20)*++{\toro}="S";
    (55,-20)*++{\subset \R^{2m_1+2m_2+m_3}}="Rn2";
    (35,-45)*++{\R^{m}}="Rm2";
	{\ar^{\displaystyle \mathcal{M}} "Rm"; "S"};
	{\ar@{-->}_{\displaystyle \cP} "Rm"; "VV"};
	{\ar_{\displaystyle \Psi} "Rm2"; "S"};
	{\ar@{-->}_{\displaystyle \,\,\,\cP\circ \mathcal{M}^{-1}} "S"; "VV"}
\endxy};
\endxy
\end{equation}
Let us prove that $\mathcal{Q}(\ot)$ is an $\om$--hybrid trigonometric parametrization (see Definition \ref{def:param-trig}). For this purpose, we have to check that the formal
substitutions in $\mathcal{Q}$ are well defined. We first observe that by Lemma \ref{lemma-inversa} (1), the substitution $\cP \circ \mathcal{M}^{-1}(\ox)=\cP(L(\ox))$ is well defined. Now, let $M(\ox)$ be the numerator of  $q_i(L(\ox))$ for some $i$, and let us assume that $M(\Psi(\ot))$ is identically zero. This implies that  $\Psi(\R^m)$ is included
in the variety $W$ defined by $M$. Therefore, taking Zariski closures, and using that $\Psi$ is dominant in $\toro$, we get that $\toro=\overline{\Psi(\R^m)}\subset W$, which contradicts Lemma \ref{lemma-inversa} (2).

Now, we check that $\cQ$ satisfies the condition in Definition~\ref{definition-trigonometric}, namely that the Zariski closure of $\cQ(\dom(\cQ))$ is $\vV$. For this purpose,
we prove that  there exists an Euclidean open set $\emptyset\neq \Theta\subset \dom(\cQ)$,  such that the Zariski closure of $\cQ(\Theta)$ is $\vV$; from where one concludes the result since
\[ \vV=\overline{\cQ(\Theta)}\subset \overline{\cQ(\dom(\cQ))}\subset \overline{\vV}=\vV.
\]
\begin{itemize}
\item[(1)] We prove that $\cP\circ \mathcal{M}^{-1}$ is defined on a non-empty Euclidean open subset (in the induced topology) $\Omega_1$ of $\toro$.

We know that $\cP(\mathcal{M}^{-1}(\ox))$ is well-defined.
Let $H(\ox)$ be the $\mathrm{lcm}$ of the denominators of $\cP(\ot)$ and $g(\ox)$ the numerator of $H(L(\ox))$. We consider the close subsets of the $\toro$, $F_{m_1}$ (see~(\ref{eq-dominioinversa})) and $\Sigma:=\{\ox\in \toro \,|\, g(\ox)=0\}$. Clearly, the open subset $\toro\setminus F_{m_1}$ is not empty, and by Lemma~\ref{lemma-inversa} (2) it holds that $\toro\setminus\Sigma\neq\emptyset$. Moreover, since $\toro$ is irreducible, then $\Omega_1:=\toro\setminus\left(F_{m_1}\cup\Sigma\right)$ is a nonempty Euclidean open subset of the torus. Finally, let us see that $\Omega_1$ is included in the domain of $\cP\circ\mathcal{M}^{-1}$. Indeed, if $\ox\in \Omega_1$, then $\ox\notin F^{m}_{m_1}$ and hence $\mathcal{M}^{-1}(\ox)=L(\ox)$ is well-defined. Moreover, since $\ox\notin\Sigma$ then $g(\ox)\neq0$, and thus $H(L(\ox))\neq0$. So, $\cP(\mathcal{M}^{-1}(\ox))$ is well-defined.
 \item[(2)] We prove that $\mathcal{Q}$ is defined on a non-empty Euclidean open subset $\Theta$ of $\R^m$.  \\
 Since $\Psi$ is continuous and
$\Omega_1$ (see above) is open, then $\Theta:=\Psi^{-1}(\Omega_1)$ is Euclidean open in $\R^m$. Furthermore, using that $\Psi(\R^m)$ is Zariski dense in $\toro$, and that $\Omega_1$ is a non-empty Zariski subset of $\toro$, by Lemma \ref{lemma-almost-surjectivity} we get that $\Theta\neq \emptyset$.
\end{itemize}

We prove that $\cQ(\Theta)$   is Zariski dense in $\vV$. Let $f:=\cP\circ \mathcal{M}^{-1}$. Since $\Theta$ is Euclidean open in $\R^n$,
there exists open intervals $A_i$,$B_i$, $C_i$ such that
$\Theta^*:=\prod_{i=1}^{m_1} A_i \times \prod_{i=m_1+1}^{m_1+m_2} B_i \times \prod_{i=m_1+m_2+1}^{m} C_i\subset \Theta$. Therefore, $\Psi(\Theta^*)$ is the product of non-empty  arcs in the unit circle, in the
hyperbola, and segments in $\R$.  Thus, the Zariski closure of $\Psi(\Theta^*)$ is $\toro$. So, $\Psi(\Theta)$ is Zariski dense in $\toro$.

Since $f$ is continuous, we have that (see e.g. Theorem 7.2. pag. 44 in \cite{Willard}) $f(\,\overline{\Psi(\Theta)}\,)\subset \overline{f(\Psi(\Theta))}$, where all closures are w.r.t. the corresponding Zariski topologies. Now, since $\Psi(\Theta)$ is dense and $f$ is a dominant map, we finally get that
$$\vV=\overline{f(\,\overline{\Psi(\Theta)}\,)}\subset \overline{f(\Psi(\Theta))}=\overline{\cQ(\Theta)}\subset \vV.$$
Therefore, $\vV=\overline{\cQ(\Theta)}$.
\end{proof}

\begin{remark}\label{re-1} Observe that in the previous proposition, the tuple $\om$ is  freely chosen.
\end{remark}

\begin{remark} We observe that if in the proof of Proposition \ref{prop-TU} we consider a map $$\Psi^*=(\psi_1,\ldots,\psi_m):\R^m\rightarrow \R^m,$$ with
$\psi_i\in \R(\{\cos(t_i),\sin(t_i)\}_{i\in J_1},\{\sinh(t_i),\cosh(t_i)\}_{i\in J_2},\{t_i\}_{i\in J_3})$, such that $\cP(\Psi^*(\ot))$ is well-defined, and $\Psi^*(\dom(\Psi^*))$ is Zariski dense in $\R^m$,
then the trigonometric parametrization $\cQ(\ot)$ can be taken as $\cP(\Psi^*(\ot))$.
\end{remark}

We finish this section with the main theorem.

\begin{theorem}\label{theorem-TU} Let $\vV$ an irreducible variety. The following statements are equivalent
\begin{enumerate}
\item $\vV$ is unirational over $\R$.
\item $\vV$ is hybrid trigonometric.
\item $\vV$ is circular trigonometric.
\item $\vV$ is hyperbolic trigonometric.
\end{enumerate}
\end{theorem}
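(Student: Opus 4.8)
The plan is to prove the equivalence of the four statements by establishing a cycle of implications, relying almost entirely on the propositions already proved in this section. The key observation is that statements (2), (3), and (4) are special cases of one another up to a choice of the triple $\om$: circular trigonometric means $\om=(m,0,0)$, hyperbolic means $\om=(0,m,0)$, and general hybrid allows any valid $\om$. So the real mathematical content connecting the trigonometric notions to unirationality is already contained in Propositions \ref{prop-UT} and \ref{prop-TU}, and the remaining work is organizational.

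First I would prove $(2)\Rightarrow(1)$: this is immediate from Proposition \ref{prop-UT}, which states that every hybrid trigonometric variety is unirational over $\R$. Next, for $(1)\Rightarrow(2)$ I invoke Proposition \ref{prop-TU}, which shows every real unirational variety is hybrid trigonometric. This already gives the equivalence $(1)\Leftrightarrow(2)$. For the remaining implications I exploit Remark \ref{re-1}: in Proposition \ref{prop-TU} the triple $\om$ is freely chosen. Hence, to obtain $(1)\Rightarrow(3)$ I apply Proposition \ref{prop-TU} with the specific choice $\om=(m,0,0)$, which by definition (see Remark \ref{re-0}) yields a circular trigonometric parametrization; and to obtain $(1)\Rightarrow(4)$ I apply it with $\om=(0,m,0)$, yielding a hyperbolic trigonometric parametrization. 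Conversely, $(3)\Rightarrow(1)$ and $(4)\Rightarrow(1)$ follow from Proposition \ref{prop-UT}, since circular and hyperbolic trigonometric varieties are particular instances of hybrid trigonometric varieties (again by Remark \ref{re-0}), and every hybrid trigonometric variety is unirational.

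To present this cleanly I would arrange the argument as: $(2)\Rightarrow(1)$ by Proposition \ref{prop-UT}; $(1)\Rightarrow(3)$ by Proposition \ref{prop-TU} with $\om=(m,0,0)$; $(3)\Rightarrow(2)$ trivially since a circular trigonometric parametrization is a hybrid one; and symmetrically $(1)\Rightarrow(4)$ with $\om=(0,m,0)$ together with $(4)\Rightarrow(2)$. This closes all the loops among the four statements, and the irreducibility hypothesis on $\vV$ is consistent throughout, since Proposition \ref{prop-irred} guarantees that hybrid trigonometric varieties are automatically irreducible.

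The step that genuinely carries weight is Proposition \ref{prop-TU}, and within it the freedom-of-$\om$ assertion recorded in Remark \ref{re-1}; everything in the theorem's proof beyond citing the two propositions is bookkeeping. The main obstacle I would anticipate is not a difficulty of proof but rather the need to state explicitly why specializing $\om$ to $(m,0,0)$ or $(0,m,0)$ does not disturb the argument of Proposition \ref{prop-TU}: one must confirm that the construction there, which builds the torus $\toro$ and the map $\mathcal{Q}=\cP\circ\mathcal{M}^{-1}\circ\Psi$, goes through verbatim when one or two of the blocks of the torus are absent, so that the resulting parametrization lies purely in $\R(\{\cos(t_i),\sin(t_i)\}_{i})$ or purely in $\R(\{\sinh(t_i),\cosh(t_i)\}_{i})$. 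Since the proof of Proposition \ref{prop-TU} was written for arbitrary $\om$ and the degenerate cases $m_i=0$ were explicitly permitted, this verification is routine, and I would simply point to Remark \ref{re-1} rather than repeat it.
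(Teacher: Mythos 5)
Your proposal is correct and follows essentially the same route as the paper: the forward implications $(1)\Rightarrow(2),(3),(4)$ come from Proposition \ref{prop-TU} with $\om=(m_1,m_2,m_3)$, $(m,0,0)$, and $(0,m,0)$ respectively, and the converses come from Proposition \ref{prop-UT} since circular and hyperbolic trigonometric varieties are special cases of hybrid ones. Your additional remarks on the degenerate cases $m_i=0$ and the cyclic arrangement of implications are just a more explicit presentation of the same two-proposition argument the paper gives.
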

\begin{proof}
(1) implies (2),(3),(4) follows from Proposition \ref{prop-TU} taking $\om=(m_1,m_2,m_3)$, $\om=(m,0,0)$ and $\om=(0,m,0)$, respectively. (2), (3), (4) follows from Proposition \ref{prop-UT}.
\end{proof}


\section{Parametrization and Implicitization Algorithms}\label{sec-algorithms}

The proofs in the previous sections are constructive, and hence provide algorithms to deal with hybrid trigonometric varieties. In this section, we derive these
algorithms that, essentially, show how to change from hybrid trigonometric parametrizations to rational parametrizations and how to implicitize.

We start outlining the procedure to transform a trigonometric parametrization in pure form (see Definition \ref{def:pure}).

\begin{algorithm}[H]
	\caption{{\bf [ConvertPure]} Convert a hybrid trigonometric parametrization in pure form (see Def. \ref{def:pure}).} \vspace*{2mm}
	\label{alg:0}
	\begin{algorithmic}[1]

		\Require{An $\om$--hybrid trigonometric parametrization $\mathcal{T}(\overline{t})$ as in Def. \ref{def:param-trig}.}
		\Ensure{A pure $\om$--hybrid trigonometric reparametrization of $\cT(\ot)$.}
        \State Apply to $\cT(\ot)$ the formulas of the sine, cosine (both circular and hyperbolic) of the addition of  angles.
        \State  Compute $\cT(\cL_1(\ot))$, where $\cL_1(\ot)$ is as in (\ref{eq:linearmap}).
        \State  Apply to $\cT(\ot)$ the expression of $\sin(kt)$, $\cos(kt)$, $\sinh(kt)$ and $\cosh(kt)$, with $k\in \mathbb{N}$, as polynomials expressions of $\sin(t),\cos(t),\sinh(t)$ and $\cosh(t)$, via Chebyshev polynomials.
\State Compute $\cL_2(\ot)$ (see (\ref{eq:linearmap2})) and  \Return  $\cT(\cL_2(\cL_1(\ot)))$.		
\end{algorithmic}
\end{algorithm}

The next two algorithms focus on the conversion from trigonometric to rational and vice-versa.

\begin{algorithm}[H]
	\caption{{\bf [FromTrigToRat]} Obtains a rational parametrization from a hybrid trigonometric parametrization.} \vspace*{2mm}
	\label{alg:1}
	\begin{algorithmic}[1]

		\Require{A hybrid trigonometric parametrization $\mathcal{T}(\overline{t})$ of an algebraic variety $\vV$.}
		\Ensure{A rational parametrization $\mathcal{G}(\overline{t})$ of $\vV$.}
        \State \textbf{If} $\cT(\ot)$ is not pure, apply Algorithm \ref{alg:0} \textbf{end if}.
	     \State Determine the rational parametrization $\mathcal{G}(\overline{t})= \mathcal{F}(\mathcal{M}(\ot)) $ (see  (\ref{eq-cF}) for the definition of $\mathcal{F}$ and (\ref{eq-param-toro}) for the definition of $\mathcal{M}$).		
\State \Return $\mathcal{G}(\overline{t})$.
\end{algorithmic}
\end{algorithm}

\begin{example}\label{ex1}
We consider the $(1,1,0)$-hybrid trigonometric parametrization
\[ \mathcal{T}(t_1,t_2)=\left( \cos \left( t_{{1}}\right)^{2} \sin \left( t_{{1}}
 \right) ,{\frac {\sin \left( t_{{1}} \right) }{\sinh \left( t_{{2}}
 \right) }}, \sin \left( t_{{1}} \right)^{3} \right)
 \]
 of $\vV$.  We apply Algorithm \ref{alg:1}. In the first step,
we observe that $\cT(\ot)$ is not pure, since $\cosh(t_2)$ does not appear in the tuple. So,   we replace $\mathcal{T}(t_1,t_2)$ by $\mathcal{T}(t_1,2t_2)$. The new parametrization is
 \[
 \mathcal{T}(t_1,t_2)=
 \left( \cos \left( t_{{1}} \right)^{2}\sin \left( t_{{1}}
 \right), {\frac {\sin \left( t_{{1}} \right) }{2 \sinh\left( t_{{
2}} \right) \cosh \left( t_{{2}} \right) }}, \sin \left( t_{{1}
} \right)^{3}  \right).
 \]
 So, we have that
 \[ \mathcal{F}(y_{11},y_{12},y_{21},y_{22})=
 \left(
 y_{{11}}^{2}y_{{12}},\frac{y_{{12}}}{{2 y_{{21}}y_{{22}}}}
,y_{{12}}^{3}
 \right)
 \]
 Finally, we get the rational parametrization of $\vV$ is
 \[
 \mathcal{G}(t_1,t_2)=\left( 4\,{\frac {t_{{1}}^{2} \left( t_{1}^{2}-1 \right) }{ \left(
 t_{1}^{2}+1 \right)^{3}}},2\,{\frac { \left( t_{1}^{2}-1
 \right) t_{2}^{2}}{ \left( t_{1}^{2}+1 \right)  \left( t_{2}^{4}-1 \right) }},{\frac { \left( t_{1}^{2}-1 \right) ^{3}}{
 \left( t_{1}^{2}+1 \right) ^{3}}}
 \right).
 \]
 In Example \ref{ex3}, we see that  $\vV$ is the surface $x_{1}^3+3x_{1}^2x_{3}+3x_{1}x_{3}^2+x_{3}^3-x_3=0$.
\end{example}

\begin{algorithm}[H]
	\caption{{\bf [FromRatToTrig]} Obtains a hybrid trigonometric parametrization from a rational  parametrization.} \vspace*{2mm}
	\label{alg:2}
	\begin{algorithmic}[1]

		\Require{A rational parametrization $\mathcal{P}(\overline{t})$ of an $m$--dimensional unirational variety $\vV$ as well as a non-negative integer triple $\om=(m_1,m_2,m_3)$  such that $m_1+m_2+m_3=m$.}
		\Ensure{A $\om$--hybrid trigonometric parametrization $\mathcal{Q}(\overline{t})$ of $\vV$.}
		\State Compute $\mathcal{Q}(\ot)=\cP(\mathcal{M}^{-1}(\Psi(\ot)))$ (see Diagram (\ref{eq:diagram 1})): for $\mathcal{M}^{-1}$ and $\Psi$ see (\ref{eq-inversa-toro}) and (\ref{eq-psi}), respectively.
\State \Return $\mathcal{Q}(\overline{t})$.
\end{algorithmic}
\end{algorithm}

\begin{example}\label{ex2}
We apply Algorithm \ref{alg:2} to the unit circle parametrized by
\[ \cP(t)=\left(\dfrac{2t}{t^2+1},\dfrac{t^2-1}{t^2+1}\right) \]
taking $\om=(1,0,0)$. In this case,
\[ \mathcal{M}^{-1}=\frac{x_1}{1-x_2}, \,\,\,\text{and}\,\,\,\Psi(t)=(\cos(t),\sin(t)) \]
and the algorithm returns the expected parametrization $\cQ(t)=(\cos(t),\sin(t))$.  Alternatively, we may consider the same
parametrization $\cP(t)$ and $\om=(0,1,0)$. In this case, we
\[ \mathcal{M}^{-1}=\frac{1}{x_1-x_2}, \,\,\,\text{and}\,\,\,\Psi(t)=(\cosh(t),\sinh(t)) \]
and the algorithm returns the hyperbolic trigonometric parametrization
\[\cQ(t)=\left(\dfrac{1}{\cosh(t)}, \dfrac{\sinh(t)}{\cosh(t)}\right).\]
\end{example}

Now we deal with the problem of implicitizing  a hybrid trigonometric parametrization. Since we already have an algorithm to compute a rational parametrization of the variety, namely Algorithm \ref{alg:1}, one can simply
apply the existing implicitization techniques to that rational parametrization.  Alternatively, one may use the implicit equations of the circles and hyperbolas involved in the input parametrization. More precisely, one has the
following algorithm.

\begin{algorithm}[H]
	\caption{{\bf [FromTrigParamToImpl]} Obtains the implicit equations from a hybrid trigonometric parametrization.} \vspace*{2mm}
	\label{alg:3}
	\begin{algorithmic}[1]

		\Require{A hybrid trigonometric parametrization $\mathcal{T}(\overline{t})$ of the hybrid trigonometric variety $\vV$.}
		\Ensure{A set of polynomials defining $\vV$.}
\newline
        \mbox{\textbf{Option 1}}		
\State Apply Algorithm \ref{alg:1} to get a rational parametrization $\mathcal{G}(\ot)$ of $\vV$.
\State Implicizite $\mathcal{G}(\ot)$ and \Return the output.
\newline
        \mbox{\textbf{Option 2}}
       \State \textbf{If} $\cT(\ot)$ is not pure, apply Algorithm \ref{alg:0} \textbf{end if}.
		\State Determine  $\mathcal{F}$ (see (\ref{eq-cF})); say $\cF=(f_1(\oy)/g_1(\oy),\ldots,f_n(\oy),g_n(\oy))$.
        \State Eliminate $\{W,\oy\}$ from $\{g_i(\oy) x_i -f_i(\oy)\}_{i=1,\ldots,n}\cup \{W\,\mathrm{lcm}(g_1,\ldots,g_n)-1\} \cup \mathcal{H}(\oy)$, where $\mathcal{H}$ is the set of generators of $\toro$ (see (\ref{eq:impl-toro})).
\State \Return the result of the previous step.
\end{algorithmic}
\end{algorithm}

\begin{example}\label{ex3}
Let $\cT(\ot)$ be the parametrization in Example \ref{ex1}, and $\mathcal{G}$ be the rational parametrization generated by Algorithm \ref{alg:1} (see Example \ref{ex1}). Implicitizing $\mathcal{G}$, that is using Option 1 in Algorithm \ref{alg:3}, one gets that $x_{1}^3+3x_{1}^2x_{3}+3x_{1}x_{3}^2+x_{3}^3-x_3$ is the implicit equation of $\vV$. Alternatively, one may use Option 2 in Algorithm \ref{alg:3}. Proceeding as in Example \ref{ex1}, we get
\[ \mathcal{F}(y_{11},y_{12},y_{21},y_{22})=
 \left(
 y_{{11}}^{2}y_{{12}},\frac{y_{{12}}}{{2 y_{{21}}y_{{22}}}}
,y_{{12}}^{3}
 \right).
 \]
 Moreover, the implicit equations of $\mathcal{H}$ are $\mathcal{H}=\{y_{11}^{2}+y_{12}^{2}-1,y_{21}^{2}-y_{22}^{2}-1\}$. Let $\mathrm{J}$ be the ideal generated by
 $\{x_1 -y_{{11}}^{2}y_{{12}},x_2 y_{12}-2 y_{21}y_{22}, x_3-y_{12}^{3} , y_{11}^{2}+y_{12}^{2}-1,y_{21}^{2}-y_{22}^{2}-1,Wy_{21}y_{22}-1\}$. Using a suitable Gr\"obner basis we get that $\mathrm{J}\cap \C[\ox]$ is generated by $\{ x_{1}^3+3x_{1}^2x_{3}+3x_{1}x_{3}^2+x_{3}^3-x_3\}$.
\end{example}

\begin{example}\label{ex33}
We consider the trigonometric variety $\vV$ in $\R^4$ with associated $\om=(2,0,0)$--parametrization
$$\mathcal{T}(t_1,t_2)=\left(\frac{1}{\cos(t_1)},\frac{\cos(t_2)}{\sin(t_1)},\frac{1}{\sin(t_1)},\frac{\cos(t_2)}{\sin(t_2)}\right).$$
Applying  Algorithm~\ref{alg:1} we obtain the rational parametrization
\[\mathcal{G}(\ot)=
\left( \frac{t_{1}^{2}+1}{2 t_{1}},
\frac{2 t_{2} \left(t_{1}^{2}+1 \right) }{ \left( t_{2}^{2}+1 \right)  \left( t_{1}^{2}-1 \right)},
\frac{t_{1}^{2}+1}{t_{1}^{2}-1},\frac{2t_{2}}{t_{2}^{2}-1}\right).\]
Applying Algorithm \ref{alg:3}, we get that $\vV$ is the surface of $\R^4$ defined by
\[
\{x_{1}^{2}x_{3}^{2}-x_{1}^{2}-x_{3}^{2}=0, x_{2}^{2}x_{4}^{2}-x_{3}^{2}x_{4}^2+x_{2}^2=0\}.
\]
\end{example}

\section{Motivating Examples of Applicability}\label{sec-aplicactions}

In this section, by means of some examples we illustrate some potential applications that motivate the use of the theory developed in the previous sections.

\subsection{Plotting}\label{subsec-plotting}

It is well known that plotting geometric objects using a parametric representation is more suitable.
In the next example we show that using an $(m_1,0,0)$--trigonometric parametrization can have advantages over a unirational
parametrization; the key idea is that in the first case the behavior of the parametrization is controlled when the parameters take values in a bounded set.

\begin{example}\label{ex:plot}
We consider the $(1,0,0)$-trigonometric parametrization
\[ \cT(t)=\left( \dfrac{(1+\cos(5t))\sin(t)}{1-\cos(t)},(1+\cos(5t))(\cos(t))\right). \]
Applying  Algorithm~\ref{alg:1} we obtain the rational parametrization
\[\mathcal{G}(t)=
\left( {\frac { \left( t+1 \right) ^{3} \left( {t}^{4}+4 {t}^{3}-14 {t}^{2
}+4 t+1 \right) ^{2}}{ \left( t-1 \right)  \left( {t}^{2}+1 \right) ^
{5}}},2 {\frac {t \left( t+1 \right) ^{2} \left( {t}^{4}+4 {t}^{3}-
14 {t}^{2}+4 t+1 \right) ^{2}}{ \left( {t}^{2}+1 \right) ^{6}}}\right).\]
Applying Algorithm \ref{alg:3}, we get that $\vV$ is the curve of $\R^2$ defined by
\[
\begin{array}{r} x_{1}^{12}x_{2}-2 x_{1}^{12}+42 x_{1}^{10}x_{2}^{2}-344 x_{1}^{8}x_{2}^{4}+32 x_{1}^
{6}x_{2}^{6}+23 x_{1}^{10}y-623 x_{1}^{8}x_{2}^{3}+3304 x_{1}^{6}x_{2}^{5}+
11824 x_{1}^{4}x_{2}^{7}\\ +3712 x_{1}^{2}x_{2}^{9}+256 x_{2}^{11}+2 x_{1}^{10}-
80 x_{1}^{8}x_{2}^{2}+1120 x_{1}^{6}x_{2}^{4}-6400 x_{1}^{4}x_{2}^{6}+12800 {x_{1}
}^{2}x_{2}^{8}
=0.
\end{array}
\]
\begin{figure}
	\centering
$\begin{array}{cc}
		\mbox{\includegraphics[width=6cm]{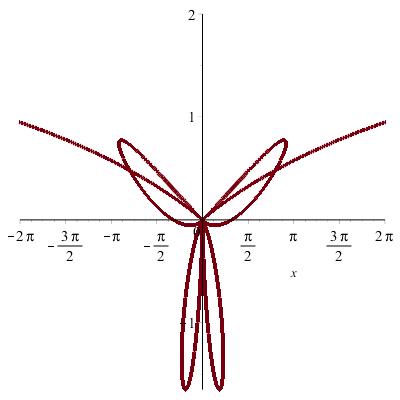}}
&
\mbox{\includegraphics[width=6cm]{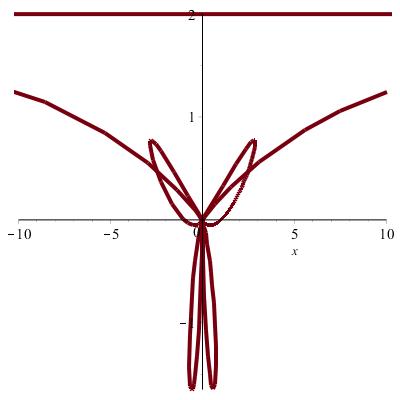}}
\end{array}$
		\caption{Example \ref{ex:plot}: plot generated by Maple using $\cT(t)$ (left) and using $\mathcal{G}(t)$ (right) }
\label{fig-1}
\end{figure}

In Fig. \ref{fig-1} we plot the real curve $\vV$. Both plots have been generated using Maple. The plot on the left was obtained using the trigonometric parametrization $\cT(t)$ with $t\in [-2\pi,2\pi]$. The plot on the right used the rational parametrization $\mathcal{G}(t)$ with $t\in [-10,10]$. One may observe that Maple, using the rational parametrization introduces wrongly the line $y=2$ which corresponds to the asymptote of the curve when $t$ tends to $-1$.
\end{example}

\subsection{Epicycloid and hypocycloid surfaces}\label{subsec-epi-hypo}

In this subsection,
we show how the classical epicycloid and hypocycloid constructions for circles can be generalized to the case of spheres, generating naturally examples of trigonometric parametrizations.  An epicycloid is a plane curve drawn by a fixed point in a circle rolling without slipping around a second fixed circle. This is a very classical curve which has been widely studied (e.g. see~\cite{lawrence}).

A natural generalization of such construction to a higher number of variables describes the trail of a fixed point in a sphere within the affine space of dimension 3, rolling around a second fixed sphere (see Fig. \ref{fig-2}). This phenomenon can be generally described in terms of a parametrization of a surface. Assume the fixed sphere is centered at the origin, with radius $R>0$, and the moving one has radius $0<r\le R$. In the case of the rolling sphere being of larger radius, an analogous construction can be made.

\begin{figure}
	\centering
		\includegraphics[width=6cm]{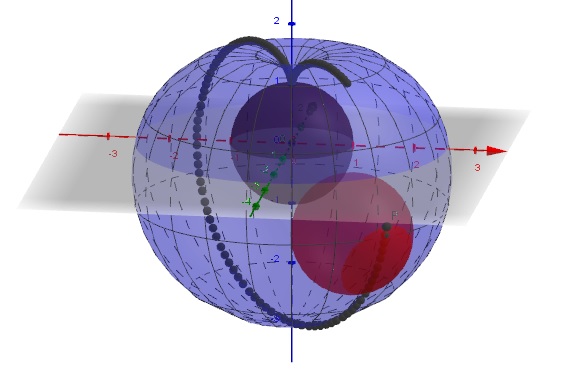}
		\caption{Construction of a 2-dimensional epicycloid $R=r=1$}\label{fig-2}
\end{figure}

After the application of an affinity on the space, one can describe the generalized epicycloid in terms of the following parametrization:

\begin{equation}\label{eq-epi}
\begin{array}{cccl}
\mathcal{T}:&[0,\pi]\times[0,2\pi) &\longrightarrow & \mathbb{R}^3\\
      & (t_1,t_2) & \longmapsto  & \left((R+r)\sin(t_1)\cos(t_2)-r\sin\left((1+\frac{R}{r})t_1\right)\cos(t_2)\right.,\\
			& \phantom{m} & \phantom{m} & (R+r)\sin(t_1)\sin(t_2)-r\sin\left((1+\frac{R}{r})t_1\right)\sin(t_2),\\
		& \phantom{m} & \phantom{m} &	\left. (R+r)\cos(t_1)-r\cos\left((1+\frac{R}{r})t_2\right)\right).
\end{array}
\end{equation}

 In the sequel, let us assume that $R/r$ is a rational number. In this situation, $\mathcal{T}$ is a $(2,0,0)$--trigonometric parametrization (see Definition \ref{def:param-trig}) and the epicycloid the surface that it generates. Moreover, one can rewrite it in pure form  (see Lemma~\ref{lemma-standard}) as well as Algorithm \ref{alg:0}.

 Let us illustrate the construction with  a particular example.

\begin{example}\label{ex:epi}
Consider the case of $R=5$ and $r=1$ (see Fig. \ref{fig-3}). Then, $\mathcal{T}$ can be writen in the form
$$\mathcal{T}(\ot)= \left(\mathcal{T}_1(t_1,t_2),\mathcal{T}_2(t_1,t_2),\mathcal{T}_3(t_1,t_2)\right),$$
where
\begin{equation}\label{eq:epi-paramT}
\begin{array}{lll}
\mathcal{T}_1(\ot)&=&6\sin(t_1)\cos(t_2)-32\cos(t_2)\sin(t_1)\cos(t_1)^5+32\cos(t_2)\sin(t_1)\cos(t_1)^3\\
&&-6\cos(t_2)\sin(t_1)\cos(t_1), \\
\mathcal{T}_2(\ot)&=&6\sin(t_1)\sin(t_2)-32\sin(t_2)\sin(t_1)\cos(t_1)^5+32\sin(t_2)\sin(t_1)\cos(t_1)^3\\
&& -6\sin(t_2)\sin(t_1)\cos(t_1), \\
\mathcal{T}_3(\ot)&=& 6\cos(t_1)-32\cos(t_1)^6+48\cos(t_1)^4-18\cos(t_1)^2+1.
\end{array}
\end{equation}
\begin{figure}
	\centering
		\includegraphics[width=5cm]{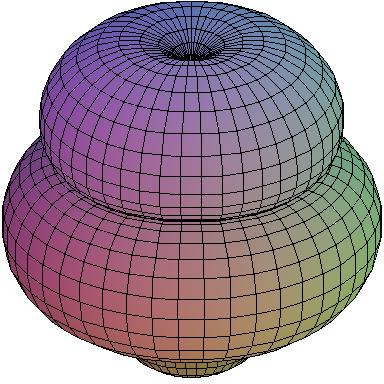}
		\caption{Generalized epicycloid in Example \ref{ex:epi}}\label{fig-3}
\end{figure}

Applying Algorithm \ref{alg:3} we get that the implicit equation of the epicycloid is given by the following polynomial

\noindent
$-52521875-42 x_{2}^{10}+90 x_{3}^4 x_{2}^4 x_{1}^4-252 x_{2}^2 x_{1}^4 x_{3}^2-252 x_{2}^2 x_{1}^2 x_{3}^4-252 x_{2}^4 x_{1}^2 x_{3}^2-840 x_{2}^2 x_{1}^6 x_{3}^2
+60 x_{3}^2 x_{2}^6 x_{1}^4+30 x_{3}^8 x_{2}^2 x_{1}^2-2436 x_{1}^2 x_{3}^4+60 x_{3}^2 x_{2}^4 x_{1}^6+x_{2}^{12}-812 x_{3}^6-4872 x_{2}^2 x_{1}^2 x_{3}^2-2436 x_{3}^4 x_{2}^2-210 x_{3}^8 x_{2}^2
-1286250 x_{1}^2-64050 x_{2}^2 x_{3}^2-1286250 x_{2}^2-42 x_{1}^{10}+x_{1}^{12}+60 x_{3}^6 x_{2}^4 x_{1}^2+20 x_{3}^6 x_{2}^6-64050 x_{1}^2 x_{3}^2+20 x_{2}^6 x_{1}^6
-84 x_{2}^6 x_{3}^2+466560 x_{3} x_{1}^4-1260 x_{2}^4 x_{1}^4 x_{3}^2-1260 x_{2}^4 x_{1}^2 x_{3}^4-1260 x_{2}^2 x_{1}^4 x_{3}^4-210 x_{1}^8 x_{3}^2+6 x_{3}^2 x_{1}^{10}+6 x_{2}^{10} x_{3}^2
+15 x_{3}^4 x_{2}^8+x_{3}^{12}+933120 x_{3} x_{2}^2 x_{1}^2-84 x_{2}^6 x_{1}^2-21 x_{1}^8-933120 x_{2}^2 x_{3}^3+15 x_{2}^4 x_{1}^8-210 x_{2}^8 x_{1}^2+466560 x_{3} x_{2}^4-210 x_{2}^2 x_{1}^8
-2436 x_{1}^4 x_{3}^2+60 x_{3}^4 x_{2}^6 x_{1}^2-84 x_{2}^2 x_{1}^6-420 x_{3}^4 x_{2}^6-420 x_{1}^4 x_{3}^6-420 x_{1}^6 x_{3}^4-1286250 x_{3}^2-126 x_{3}^4 x_{2}^4-933120 x_{1}^2 x_{3}^3
+93312 x_{3}^5-840 x_{2}^2 x_{1}^2 x_{3}^6+60 x_{3}^4 x_{2}^2 x_{1}^6+30 x_{3}^2 x_{2}^2 x_{1}^8+30 x_{3}^2 x_{2}^8 x_{1}^2-210 x_{2}^8 x_{3}^2-420 x_{2}^6 x_{1}^4-84 x_{1}^2 x_{3}^6
+15 x_{3}^8 x_{1}^4+20 x_{3}^6 x_{1}^6+15 x_{3}^4 x_{1}^8-2436 x_{2}^2 x_{1}^4+6 x_{3}^{10} x_{2}^2-420 x_{2}^4 x_{3}^6-84 x_{1}^6 x_{3}^2-32025 x_{2}^4-126 x_{1}^4 x_{3}^4+15 x_{2}^8 x_{1}^4
-2436 x_{2}^4 x_{3}^2-32025 x_{3}^4+60 x_{3}^6 x_{2}^2 x_{1}^4-21 x_{2}^8-812 x_{2}^6-420 x_{2}^4 x_{1}^6+15 x_{3}^8 x_{2}^4-21 x_{3}^8-840 x_{2}^6 x_{1}^2 x_{3}^2-210 x_{1}^2 x_{3}^8
-32025 x_{1}^4-2436 x_{2}^4 x_{1}^2+6 x_{2}^2 x_{1}^{10}-812 x_{1}^6-42 x_{3}^{10}+6 x_{2}^{10} x_{1}^2+6 x_{3}^{10} x_{1}^2-126 x_{2}^4 x_{1}^4-64050 x_{2}^2 x_{1}^2-84 x_{3}^6 x_{2}^2.
$

\noindent Applying Algorithm \ref{alg:1} we get the following rational parametrization of the epicycloid
\begin{equation}\label{eq:epi-paramR}
\begin{array}{lll}
\mathcal{G}(\ot)&=& \left(4 \dfrac { \left( t_{1}^{2}-1 \right) t_{2} g_1(\ot)}{ \left( t_{2}^{2}+1 \right)  \left( t_{1}^{2}+1
\right) ^{6}},
2 \dfrac{ \left( t_{1}^{2}-1 \right)  \left( t_{2}^{2}-1 \right) g_1(\ot)}{ \left( t_{2}^{2}+1 \right)  \left( t_{1}^{2}+1 \right) ^{6}},
\dfrac{g_2(\ot)}{\left( t_{1}^{2}+1 \right)^{6}}\right),
\end{array}
\end{equation}
where $g_1(\ot)=3 t_{1}^{10}-6 t_{1}^{9}+
15 t_{1}^{8}+104 t_{1}^{7}+30 t_{1}^{6}-292 t_{1}^{5}+30 t_{1}^{4}+104 t_{1}
^{3}+15 t_{1}^{2}-6 t_{1}+3$ and $g_2(\ot)=t_{1}^{12}+12 t_{1}^{11}-66 t_{1}^{10}+60 t_{1}^{9}+495 t_{1}^{8}+
120 t_{1}^{7}-924 t_{1}^{6}+120 t_{1}^{5}+495 t_{1}^{4}+60 t_{1}^{3}-66
t_{1}^{2}+12 t_{1}+1$.
\end{example}

We can adapt the previous reasoning to the case of hypocycloids. The construction of a classical hypocycloid is analogous to that of a cycloid. Here, the moving disc is rolling inside the fixed one (see Fig \ref{fig-hypo}). We consider the generalization in which a sphere of radius $r>0$ is rolling inside a fixed one of radius $r<R$.

\begin{figure}
	\centering
		\includegraphics[width=6cm]{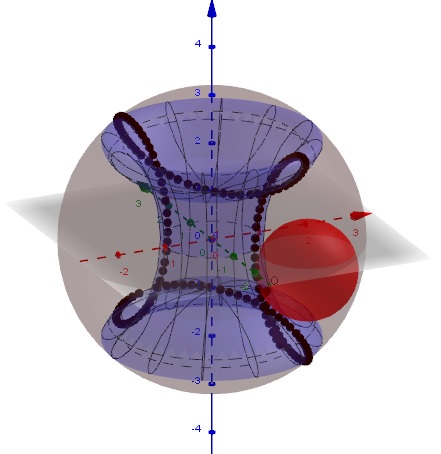}
		\caption{Construction of hypocycloid $R=3$,$r=1$}\label{fig-hypo}
\end{figure}

We assume again that $R/r$ is a natural number. After the application of an affinity on the space, one can describe the generalized hypocycloid in terms of the following $(2,0,0)$-trigonometric parametrization:
\begin{equation}\label{eq-epi2}
\begin{array}{cccl}
\mathcal{T}:&[0,\pi]\times[0,2\pi) &\longrightarrow & \mathbb{R}^3\\
      & (t_1,t_2) & \longmapsto  & \left((R-r)\sin(t_1)\cos(t_2)-r\sin\left(\frac{R}{r}t_1\right)\cos(t_2)\right.,\\
			& \phantom{m} & \phantom{m} & (R-r)\sin(t_1)\sin(t_2)-r\sin\left(\frac{R}{r}t_1\right)\sin(t_2),\\
		& \phantom{m} & \phantom{m} &	\left. (R-r)\cos(t_1)-r\cos\left(\frac{R}{r}t_2\right)\right).
\end{array}
\end{equation}

\begin{example}\label{ex-hypo}
Consider the case of $R=7$ and $r=1$ (see Fig \ref{fig-hypoEx}). Then, $\mathcal{T}$ can be written in the form
$$\mathcal{T}(t_1,t_2)= \left(\mathcal{T}_1(t_1,t_2),\mathcal{T}_2(t_1,t_2),\mathcal{T}_3(t_1,t_2)\right),$$
where
\[
\begin{array}{lll}
\mathcal{T}_1(\ot)&=&5\sin(t_1)\cos(t_2)+64\cos(t_2)\sin(t_1)\cos(t_1)^6-80\cos(t_2)\sin(t_1)\cos(t_1)^4\\
&&+24\cos(t_2)\sin(t_1)\cos(t_1)^2, \\
\mathcal{T}_2(\ot)&=&5\sin(t_1)\sin(t_2)+64\sin(t_2)\sin(t_1)\cos(t_1)^6-80\sin(t_2)\sin(t_1)\cos(t_1)^4\\
&&+24\sin(t_2)\sin(t_1)\cos(t_1)^2,\\
\mathcal{T}_3(\ot)&= &13\cos(t_1)-64\cos(t_1)^7+112\cos(t_1)^5-56\cos(t_1)^3.
\end{array}
\]

\begin{figure}
	\centering
		\includegraphics[width=5cm]{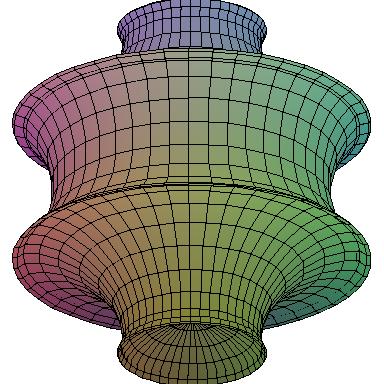}
		\caption{Example of generalized hypocycloid in Example \ref{ex-hypo}}\label{fig-hypoEx}
\end{figure}

Applying Algorithm \ref{alg:3}, we get the following polynomial that defines the hypocyclod.

\noindent
$-2251875390625+4245 x_{3}^8 x_{2}^2+35 x_{2}^6 x_{1}^8+x_{2}^{14}+584965 x_{2}^8+x_{3}^{14}+x_{1}^{14}+174 x_{2}^{10} x_{1}^2
+105 x_{2}^4 x_{1}^8 x_{3}^2+140 x_{2}^2 x_{1}^6 x_{3}^6+16980 x_{3}^6 x_{2}^2 x_{1}^2+105 x_{2}^2 x_{1}^4 x_{3}^8+105 x_{2}^2 x_{1}^8 x_{3}^4
+42 x_{2}^2 x_{1}^{10} x_{3}^2+870 x_{2}^8 x_{1}^2 x_{3}^2+870 x_{2}^2 x_{1}^8 x_{3}^2+870 x_{2}^2 x_{3}^8 x_{1}^2+1740 x_{2}^2 x_{3}^6 x_{1}^4+1740 x_{2}^4 x_{1}^6 x_{3}^2
+210 x_{2}^4 x_{3}^6 x_{1}^4+16980 x_{2}^2 x_{1}^6 x_{3}^2+25470 x_{3}^4 x_{2}^4 x_{1}^2+42 x_{2}^2 x_{1}^2 x_{3}^{10}+25470 x_{3}^4 x_{2}^2 x_{1}^4
+7 x_{2}^{12} x_{1}^2+8490 x_{3}^4 x_{2}^6+8490 x_{3}^6 x_{1}^4-46728132 x_{2}^4 x_{1}^2 x_{3}^2+78683196 x_{2}^2 x_{3}^4 x_{1}^2+4490850 x_{2}^2 x_{1}^2 x_{3}^2
-46728132 x_{2}^2 x_{3}^2 x_{1}^4+42 x_{2}^{10} x_{1}^2 x_{3}^2+25470 x_{2}^4 x_{3}^2 x_{1}^4+140 x_{2}^6 x_{1}^6 x_{3}^2+16980 x_{2}^6 x_{1}^2 x_{3}^2
+105 x_{2}^4 x_{3}^8 x_{1}^2+210 x_{2}^4 x_{3}^4 x_{1}^6+210 x_{2}^6 x_{3}^4 x_{1}^4+105 x_{2}^8 x_{3}^4 x_{1}^2+140 x_{2}^6 x_{3}^6 x_{1}^2+105 x_{2}^8 x_{3}^2 x_{1}^4
+1740 x_{2}^2 x_{3}^4 x_{1}^6+2610 x_{2}^4 x_{3}^4 x_{1}^4+1740 x_{2}^6 x_{3}^4 x_{1}^2+1740 x_{2}^4 x_{3}^6 x_{1}^2+1740 x_{2}^6 x_{3}^2 x_{1}^4-15576044 x_{2}^6 x_{3}^2
+2339860 x_{2}^2 x_{1}^6+2339860 x_{2}^6 x_{1}^2+4245 x_{3}^8 x_{1}^2+8490 x_{3}^6 x_{2}^4+748475 x_{2}^6+29 x_{1}^{12}-15576044 x_{1}^6 x_{3}^2
+39341598 x_{1}^4 x_{3}^4+4245 x_{1}^8 x_{3}^2+3509790 x_{2}^4 x_{1}^4+849 x_{2}^{10}+35 x_{2}^6 x_{3}^8+2245425 x_{3}^2 x_{1}^4+435 x_{2}^8 x_{1}^4
+435 x_{2}^8 x_{3}^4+174 x_{2}^2 x_{1}^{10}+7 x_{2}^{12} x_{3}^2+8490 x_{2}^6 x_{1}^4+8490 x_{2}^4 x_{1}^6+849 x_{3}^{10}-15576044 x_{1}^2 x_{3}^6+45018750 x_{2}^2 x_{1}^2
+45018750 x_{1}^2 x_{3}^2+4245 x_{2}^8 x_{3}^2+7 x_{2}^2 x_{3}^{12}+35 x_{1}^8 x_{3}^6+7 x_{3}^2 x_{1}^{12}+7 x_{1}^2 x_{3}^{12}+35 x_{1}^6 x_{3}^8+435 x_{1}^8 x_{3}^4
+174 x_{1}^2 x_{3}^{10}+174 x_{1}^{10} x_{3}^2+435 x_{1}^4 x_{3}^8+580 x_{1}^6 x_{3}^6+2245425 x_{2}^4 x_{1}^2+435 x_{2}^4 x_{1}^8+35 x_{2}^8 x_{3}^6+2245425 x_{2}^4 x_{3}^2
+174 x_{2}^2 x_{3}^{10}+45018750 x_{2}^2 x_{3}^2+35 x_{2}^8 x_{1}^6+21 x_{2}^{10} x_{1}^4-15576044 x_{2}^2 x_{3}^6+21 x_{1}^{10} x_{3}^4+21 x_{1}^4 x_{3}^{10}
+2245425 x_{3}^4 x_{1}^2+4245 x_{2}^8 x_{1}^2+174 x_{2}^{10} x_{3}^2+2245425 x_{2}^2 x_{1}^4+748475 x_{3}^6+748475 x_{1}^6+7 x_{2}^2 x_{1}^{12}+39341598 x_{2}^4 x_{3}^4
+4245 x_{2}^2 x_{1}^8+849 x_{1}^{10}+22509375 x_{2}^4+22509375 x_{1}^4+21 x_{2}^{10} x_{3}^4+580 x_{2}^6 x_{1}^6+22509375 x_{3}^4+21 x_{2}^4 x_{1}^{10}
+79045421875 x_{3}^2+584965 x_{1}^8+584965 x_{3}^8+2245425 x_{2}^2 x_{3}^4+21 x_{2}^4 x_{3}^{10}+435 x_{2}^4 x_{3}^8+580 x_{2}^6 x_{3}^6+29 x_{2}^{12}
+79045421875 x_{1}^2+79045421875 x_{2}^2+8490 x_{3}^4 x_{1}^6+29 x_{3}^{12}.$
\end{example}

\subsection{Computing Intersections}\label{subsec-intersection}

Let us say that we want to compute the intersection of two algebraic surfaces. Usually one takes, if possible, a parametrization of one of the surfaces, and substitute it in the implicit equation of the other. This provides an equation that encodes the parameter values to be substituted in the parametrization to achieve the intersection set. In  the following example we see that if we are given a trigonometric parametrization (for instance when dealing with an epicycloid) the task is more difficult than using a rational parametrization.

\begin{example}\label{ex:inter}
In this example, we consider the epicycloid of Example \ref{ex:epi}, let us call it $\vV_1$, and the sphere $\vV_2$ of equation $x_{1}^{2}+x_{2}^{2}+x_{3}^{2}=36$. We want to compute $\vV_1\cap \vV_2$. The construction of the generalized epicycloid with such sphere suggests a nonempty intersection, as it can be observed in Figure~\ref{figaux1}.

\begin{figure}
	\centering
		\includegraphics[width=5cm]{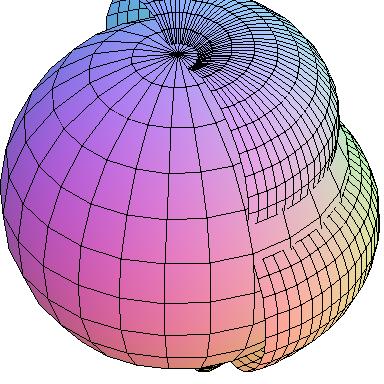}
		\caption{Detail on the intersection of an epicycloid with a sphere}\label{figaux1}
\end{figure}

Although the parametrization of $\vV_2$ is simple, the implicit equation of $\vV_1$ is huge. So, we try to use a parametrization of $\vV_1$ and the implicit equation of the sphere. If we use the trigonometric parametrization $\cT(\ot)$ (see (\ref{eq:epi-paramT})), we get the equation
\[ -192\cos(t_1)^5+240\cos(t_1)^3-60\cos(t_1)+1=0.\]
Maple provides only the solution $t_1=0.297473248$ that generates the circle parametrized as
\[ (0.7814521186\cos(t_2), 0.7814521186\sin(t_2), 5.94889339). \]
Taking into account Fig. \ref{figaux1}, this solution looks incomplete. However, if we use the rational parametrization $\mathcal{G}(\ot)$ given in (\ref{eq:epi-paramR}) we get the equation
\[  t_{1}^{10}-120t_{1}^{9}+5t_{1}^{8}+1440t_{1}^{7}+10t_{1}^{6}-3024t_{1}^{5}+10t_{1}^{4}+1440_{t}^{3}+5t_{1}^{2}-120 t_{1}+1=0,
\]
which roots are all real and can be approximated as
\[ \begin{array}{r} \{-2.992499717, -1.400811244, -0.7138720538, -0.3341687869, 0.008343202240, \\ 0.3157206162, 0.739367548, 1.352507292, 3.167357305, 119.8580558\}.
\end{array} \]
Substituting these two roots in $\mathcal{G}(\ot)$ we get the following five circles of intersection
\[
\begin{array}{l}
\left( 1.56290425{\dfrac{t_{2}}{t_{2}^{2}+ 1.0}},
 0.781452121{\dfrac {t_{2}^{2}- 1.0}{t_{2}^{2}+ 1.0}},
 5.948893399\right),\\ \\
 \left( 5.72892752{\dfrac {t_{2}}{t_{2}^{2}+ 1.0}},
 2.864463754{\dfrac {t_{2}^{2}- 1.0}{t_{2}^{2}+ 1.0}},-
 5.272081897\right),\\ \\
 \left( 8.25775970{\dfrac {t_{2}}{t_{2}^{2}+ 1.0}},
 4.128879847{\dfrac {t_{2}^{2}- 1.0}{t_{2}^{2}+ 1.0}},-
 4.353429820\right),\\ \\
 \left( 10.83250368{\dfrac {t_{2}}{t_{2}^{2}+ 1.0}},
 5.416251839{\dfrac {t_{2}^{2}- 1.0}{t_{2}^{2}+ 1.0}},
 2.581514281\right),\\ \\
 \left( 11.79843163{\dfrac {t_{2}}{t_{2}^{2}+ 1.0}},
 5.899215810{\dfrac {t_{2}^{2}- 1.0}{t_{2}^{2}+ 1.0}},
 1.095104026\right).
\end{array}
\]
In order to check that this last result is correct, we compute a Gr\"obner basis of the ideal of $\vV_1\cap \vV_2$ w.r.t. a lexicographic order to
get
\[ \{1492992 x_{3}^5-67184640x_{3}^3+604661760 x_{3}-576284939, x_{1}^2+x_{2}^2+x_{3}^2-36 \}.\]
The roots of the univariate polynomial in the basis are $$\{-5.272081883, -4.353429821, 1.095104026, 2.581514286, 5.948893392 \}$$ that are the level planes where the circles lie on. On the other hand substituting these 5 roots in the second polynomial we get the circles. In Fig. \ref{fig-int} we plot the five intersection circles.
\begin{figure}
	\centering
$\begin{array}{cc}
		\mbox{\includegraphics[width=6cm]{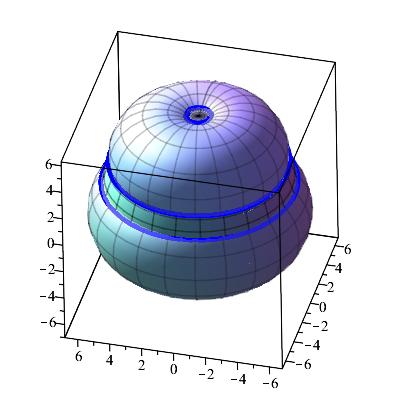}}
&
\mbox{\includegraphics[width=6cm]{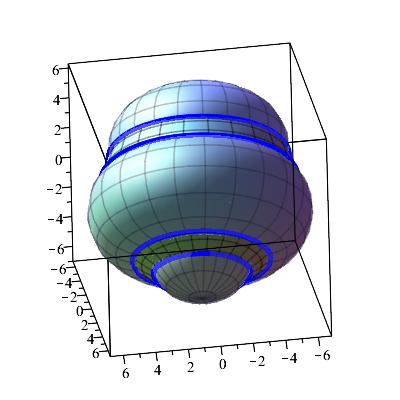}}
\end{array}$
		\caption{View of the intersection circles in Example \ref{ex:inter}. }
\label{fig-int}
\end{figure}
\end{example}

\subsection{Solving Differential Equations}\label{subsec-edo}

In the work~\cite{naher} the extended generalized Riccati Equation Mapping Method is applied to the (1+1)-Dimensional Modified KdV Equation
\begin{equation}\label{e1aux}
u_t-u^2u_x+\delta u_{xxx}=0,
\end{equation}
for some fixed $\delta>0$. More precisely, solutions of the generalized Riccati equation, namely
 \begin{equation}\label{eaux2}
G'=r+pG+qG^2,
\end{equation}
are used in order to provide solutions of (\ref{e1aux}). The authors arrive at different families of solutions, classified into soliton and soliton-like solutions (written in terms of rational functions of hyperbolic ones), and periodic solutions (written as rational functions of trigonometric ones), under different cases of the parameters involved.

In the following, we see how taking the particular solutions provided in \cite{naher} of (\ref{eaux2}) and using the ideas of this paper, we can generate all families of solutions in \cite{naher}. More precisely, using the notation in \cite{naher}, we take the solution of (\ref{eaux2})
\[ G_1(\eta)= -\dfrac{1}{2q} \left(p+\delta\tanh\left(\dfrac{\delta}{2}\eta \right) \right)\]
where $\delta=\sqrt {{p}^{2}-4\,qr}$. Therefore,
$$\cP(\eta)=(G_1(\eta),G_{1}^{\,\prime}(\eta))$$
 is a parametrization of the algebraic variety $\vV$ associated with  (\ref{eaux2}), namely the conic $y=r+px+qx^2$ (see \cite{fenggao2}). Since we do not whether $\delta$ is a rational number, $\cP(\eta)$ may not satisfy the conditions in Definition \ref{def:param-trig}. However the reparametrization
 $$\cT(\eta)=\cP\left(\frac{2}{\delta} \eta \right)
$$
 does, and it is a $(0,1,0)$--trigonometric parametrization of $\vV$.
 Algorithm \ref{alg:1} provides a rational parametrization of the variety, given by
 \[ \left( -{\frac {1}{2q} \left( p+{\frac {\delta\, \left( {\eta}^{2}-1
 \right) }{{\eta}^{2}+1}} \right) },-{\frac {{\delta}^{2}{\eta}^{2}}{q
 \left( {\eta}^{2}+1 \right) ^{2}}}\right), \]
that can be properly reparametrized as
\[ \mathcal{G}(\eta):=(g_1(\eta),g_2(\eta))=\left(-{\frac {\delta\,\eta+\eta\,p-\delta+p}{2q \left( \eta+1 \right)
}},-{\frac {{\delta}^{2}\eta}{q \left( \eta+1 \right) ^{2}}}\right). \]
The previous parametrization is no longer a solution of (\ref{eaux2}), so we search for a function $t\mapsto\phi(t)$ such that
$\mathcal{G}(\phi(t))$
provides a solution of (\ref{eaux2}). We ask the derivative of $g_1(\phi(t))$ with respect to $t$ to coincide with $g_2(\phi(t))$ to obtain a differential condition on $\phi$. Namely,
$$
-\delta\phi(t)+\phi(t)'=0,$$
with general solution
$$\phi(t)= C {\rm e}^{\delta t}.$$
So, we get the general solution
\[S(\eta,C)=-{\frac {\delta\,C\,{{\rm e}^{\delta\,\eta}}+C
\,{{\rm e}^{\delta\,\eta}}p-\delta+p}{2q \left(C\,{{\rm e}^{
\delta\,\eta}}+1 \right) }}\]
of the generalized Ricatti equation. Now, from $S(\eta,C)$ one may obtain the families of solutions in \cite{naher}. For instance, using the notation in \cite{naher}, $S(\eta,1)=G_1(\eta)$, $S(\eta,-1)=G_2(\eta)$, $S(\eta,\pm i)=G_3(\eta)$, $S(\eta,\mp 1)=G_4(\eta)$, etc.

 We observe that if one proceeds analogously replacing the rational parametrization by the hyperbolic parametrization $\cT(\eta)=(h_1(\eta),h_2(\eta))$, the procedure does not succeed. More precisely, we consider an unknown function $\psi(t)$ and search for all such functions which satisfy
$h_2(\psi(t))=\frac{d}{dt}(h_1(\psi(t))).$
We only get $\psi(t)=t+C$, for $C$ being an arbitrary constant.

\section{Conclusions} We have introduced a new type of parametrizations, namely those involving rationally circular and hyperbolic trigonometric functions and monomials being each of these three block depending of different sets of parameters. We have seen that the algebraic varieties defined by these new objects are precisely the real unirational varieties. In addition, we provide algorithms to deal with the computation of the generators of the variety, and to convert from trigonometric to unirational and viceversa. We have also illustrated by means of examples that having the option of parametrizing in these two different ways is an advantage for dealing with some applications; for some a rational parametrization is better, for others a trigonometric parametrization is more suitable.

\noindent \textbf{Acknowledgements.}  A. Lastra is supported by the Spanish Ministerio de Econom\'{\i}a, Industria y Competitividad under the Project MTM2016-77642-C2-1-P. J. R. Sendra  and J. Sendra are supported by the Spanish Ministerio de Econom\'{\i}a y Competitividad, and by the European Regional Development Fund (ERDF), under the Project MTM2014-54141-P.

The discussions with Mauro Beltrametti were very helpful. We are very grateful.     We also thank Daniel Pizarro for pointing out the example in the application in Subsection \ref{subsec-edo}.

\end{document}